%
%   Authors:  Franc Forstneric and Finnur Larusson
%
%   A few very minor changes made 4 July 2014 in response to referee's comments.
%

\documentclass[12pt]{amsart}
\usepackage{amsfonts,amssymb,amscd,amstext,mathrsfs}

\input xy
\xyoption{all}

\textwidth = 157mm
\textheight = 239mm
\evensidemargin=0mm
\oddsidemargin=0mm
\hoffset=4mm
\voffset=-25mm
\parskip =1mm
\parindent = 6mm
\linespread{1.09}
\pagestyle{plain}

\newtheorem{theorem}{Theorem}[section]
\newtheorem{lemma}[theorem]{Lemma}

\newtheorem{proposition}[theorem]{Proposition}

\theoremstyle{definition}

\newcommand{\C}{\mathbb{C}}
\newcommand{\D}{\mathbb{D}}
\newcommand{\N}{\mathbb{N}}
\renewcommand{\P}{\mathbb{P}}
\newcommand{\R}{\mathbb{R}}

\newcommand{\cG}{\mathscr{G}}
\newcommand{\cO}{\mathscr{O}}
\newcommand{\cS}{\mathscr{S}}

\newcommand\Aut{\operatorname{Aut}}
\newcommand\Auta{\operatorname{Aut}_{\mathrm{alg}}}
\newcommand\Autaone{\operatorname{Aut}_{\mathrm{alg},1}}
\newcommand\Autone{\operatorname{Aut}_{1}}
\newcommand\Autsp{\operatorname{Aut}_{\mathrm{sp}}}
\newcommand\Autasp{\operatorname{Aut}_{\mathrm{alg,sp}}}

\newcommand\Id{\mathrm{id}}
\newcommand\wt{\widetilde}

\numberwithin{equation}{section}

\begin{document}
\title{Oka properties of groups \\ of holomorphic and algebraic automorphisms \\ of complex affine space}
\author{Franc Forstneri\v c and Finnur L\'arusson}

\address{Franc Forstneri\v c, Faculty of Mathematics and Physics, University of Ljubljana, and Institute of Mathematics, Physics, and Mechanics, Jadranska 19, 1000 Ljubljana, Slovenia}
\email{franc.forstneric@fmf.uni-lj.si}

\address{Finnur L\'arusson, School of Mathematical Sciences, University of Adelaide, Adelaide SA 5005, Australia}
\email{finnur.larusson@adelaide.edu.au}

\thanks{Forstneri\v c was supported by grants P1-0291 and J1-5432 from ARRS, Republic of Slovenia.  L\'arusson was supported by Australian Research Council grant DP120104110.}

\subjclass[2010]{Primary 32E10.  Secondary 14R10, 32E30, 32H02, 32Q28}

%%%%%
\date{18 February 2014.  Minor changes 4 July 2014.}
%%%%%

\keywords{Stein manifold, Oka manifold, complex affine space, holomorphic automorphism, polynomial automorphism, approximation, interpolation}

\begin{abstract}
We show that the group of all holomorphic automorphisms of complex affine space $\C^n$, $n>1$, and several of its subgroups satisfy the parametric Oka property with approximation and with interpolation on discrete sets. 
\end{abstract}

\maketitle
\tableofcontents

%  INTRODUCTION

\section{Introduction} 
\label{sec:intro}

\noindent
The holomorphic automorphisms of the complex line $\C$ are simply the affine maps $z\mapsto az+b$, $a,b\in \C$, $a\neq 0$.  For $n>1$, the group $\Aut\C^n$ of holomorphic automorphisms of $n$-dimensional complex affine space $\C^n$ is very large and complicated.  Holomorphic automorphisms of $\C^n$, $n>1$, have been intensively studied for the past 25 years, starting with Rosay and Rudin's seminal paper \cite{RR1988}.  A central result in this theory is the theorem of Anders\'en and Lempert, which states that every automorphism of $\C^n$, $n>1$, is a limit of compositions of special automorphisms called {\em shears} (for the definition of a shear, see (\ref{eq:shear}) below).  The infinitesimal version of the theorem is the Anders\'en-Lempert lemma, which states that every polynomial vector field on $\C^n$, $n>1$, is the sum of finitely many complete polynomial vector fields.  See \cite{A,AL}; also \cite{FR1993}, \cite[Chapter 4]{FF:book}, and \cite{KK2011}.

The study of holomorphic automorphisms of $\C^n$, $n>1$, has mainly focused on individual automorphisms and the various ways in which they can act on $\C^n$.  In this paper, we consider the whole group $\Aut\C^n$ as if it were a complex manifold.  It is not really an infinite-dimensional complex manifold, at least as far as we know, but there is a natural notion of a map $f$ from a complex manifold or a complex space $X$ into $\Aut\C^n$ being holomorphic.  It simply means that the associated map $X\times\C^n\to\C^n$, $(x,z)\mapsto f(x)(z)$, is holomorphic in the usual sense.  (The notion of a continuous map into $\Aut\C^n$ is straightforward, as $\Aut\C^n$ is naturally equipped with the compact-open topology.)

Complex Lie groups are the prototypical examples of Oka manifolds.  In particular, $\Aut\C$ is an Oka manifold.  Our thesis is that $\Aut\C^n$ behaves like an Oka manifold for $n>1$.  More precisely, we prove that $\Aut\C^n$ satisfies the parametric Oka property with approximation and with interpolation on discrete sets.  The parametric Oka property with approximation is one of the several mutually equivalent defining properties of an Oka manifold.  For the theory of Oka manifolds we refer the reader to the monograph \cite{FF:book} and the surveys \cite{F2013} and \cite{FL2011}.

The {\em parametric Oka property with approximation and interpolation} (POPAI) of a complex manifold $Y$ is defined as follows.  Let $X_0$ be a closed subvariety of a Stein space $X$.  (In this paper, all complex spaces are taken to be reduced.)  Let $K$ be a holomorphically convex compact subset of $X$.  Let $P_0\subset P$ be compact subsets of $\R^m$ (or sometimes more general parameter spaces).  For every continuous map $f:P\times X\to Y$ such that 
\begin{itemize}
\item $f|P_0\times X$ is $X$-holomorphic, meaning that $f(p,\cdot):X \to Y$ is holomorphic for every $p\in P_0$, and
\item $f(p,\cdot)$ is holomorphic on $K\cup X_0$ for every $p\in P$,
\end{itemize}
there is a continuous deformation $f_t:P\times X\to Y$, $t\in[0,1]$, of $f=f_0$ such that
\begin{itemize}
\item the deformation is fixed on $(P_0\times X)\cup (P\times X_0)$,
\item for every $t\in[0,1]$, $f_t(p,\cdot)$ is holomorphic on $K$ for every $p\in P$ and $f_t$ is uniformly close to $f$ on $P\times K$, and 
\item $f_1$ is $X$-holomorphic, that is, $f_1(p,\cdot):X\to Y$ is holomorphic for every $p\in P$.
\end{itemize}
Taking $X_0=\varnothing$ gives the {\em parametric Oka property with approximation} (POPA).  Taking $K=\varnothing$ gives the {\em parametric Oka property with interpolation} (POPI).  Taking $P$ to be a singleton and $P_0=\varnothing$ gives the {\em basic} version of each property (BOPAI, BOPA, BOPI).  BOPI implies that if a holomorphic map $X_0\to Y$ has a continuous extension $X\to Y$, then it has a holomorphic extension.  It is a major theorem that these six properties (and several others, including the consequence of BOPI just mentioned) are equivalent \cite{FF2009}, so they are referred to as the {\em Oka property}.  The Oka property is known to be homotopy-theoretic in the precise sense that it is equivalent to fibrancy in a certain model structure \cite{FL2004}.

Our results cover not only $\Aut \C^n$ itself, but the following six groups:
\begin{equation}\label{eq:groups}
\Aut\C^n, \quad \Autone\C^n, \quad \Autsp\C^n, \quad \Auta\C^n, \quad \Autaone\C^n, \quad \Autasp\C^n.
\end{equation}
Here, $\Autone\C^n$ consists of all holomorphic automorphisms of $\C^n$ with complex Jacobian $1$, $\Autsp\C^{n}$ for even $n=2m$ is the group of holomorphic automorphisms preserving the holomorphic symplectic form $\sum\limits_{j=1}^m dz_{2j-1}\wedge dz_{2j}$, $\Auta\C^n$ is the group of polynomial automorphisms of $\C^n$, 
\[ \Autaone\C^n=\Auta\C^n\cap \Autone\C^n, \]
and 
\[ \Autasp\C^n=\Auta\C^n\cap \Autsp\C^n. \]
We endow all these groups with the compact-open topology.  If $\cG$ is one of these groups, then POPAI with $Y$ above replaced by $\cG$ makes sense.  

Our main result is the following Oka principle.

\begin{theorem} \label{th:OP}
Let $n>1$.  The groups $\Aut\C^n$, $\Autone\C^n$, and $\Autsp\C^n$ satisfy the parametric Oka property with approximation and with interpolation on discrete sets.

The groups $\Auta\C^n$, $\Autaone\C^n$, and $\Autasp\C^n$ satisfy the parametric Oka property with approximation and with interpolation on finite sets.

The group $\Auta\C^2$ satisfies the parametric Oka property with approximation and with interpolation on discrete sets for maps whose values have bounded degree. 
\end{theorem}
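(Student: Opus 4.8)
The plan is to prove all three parts with a single engine — the Andersén–Lempert theory, used in a form that depends holomorphically on the $X$-variable and continuously on the parameter — and to obtain interpolation from approximation. First I would reduce the parametric Oka property with approximation (POPA) for each group $\cG$ in (\ref{eq:groups}) to a \emph{splitting lemma}, exactly as in the proof of the classical Oka principle: one inducts over a normal exhaustion of the Stein space $X$ by sublevel sets of a strictly plurisubharmonic exhaustion function, so that at each bump one must glue two $X$-holomorphic $\cG$-valued maps that agree up to a small error over the intersection $A\cap B$ of a Cartan pair $(A,B)$. The splitting lemma to be established is: if $c\colon P\times(A\cap B)\to\cG$ is a continuous family of $X$-holomorphic maps with $c(p,\cdot)$ uniformly close to the constant map $\Id$, then $c=b\cdot a^{-1}$ (composition in $\cG$, pointwise over $X$) for continuous families $a\colon P\times A\to\cG$ and $b\colon P\times B\to\cG$ of $X$-holomorphic maps that are close to $\Id$ and equal to $\Id$ whenever $c\equiv\Id$. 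Granting this, POPA follows, and then the Oka property with interpolation on a discrete (resp.\ finite) set $X_0$ is obtained by first approximating a given $f$ on $K$ by an $X$-holomorphic $\tilde f$ and then composing with an $X$-holomorphic correction $h$ satisfying $h(p,x_j)=\tilde f(p,x_j)^{-1}f(p,x_j)$ at the points $x_j\in X_0$ and $h$ close to $\Id$ near $K$; the existence of $h$ is supplied by the Andersén–Lempert theorem, which realizes prescribed jets at finitely many points by an automorphism as close to the identity as one wishes away from those points, combined with an exhaustion of $X_0$ by finite subsets.

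The heart of the matter, and the step I expect to be the main obstacle, is the splitting lemma, where $n>1$ and the group law enter decisively. After shrinking to a Stein neighbourhood of $A\cap B$ on which $c$ is holomorphic, one writes $c=\varphi^{t_N}_{V_N}\circ\cdots\circ\varphi^{t_1}_{V_1}$ as a composition of flows of finitely many complete vector fields $V_i$ on $\C^n$, with $X$-holomorphic ``times'' $t_i\colon P\times(A\cap B)\to\C$ that are small and vanish when $c\equiv\Id$ — a parametric, $X$-fibered form of the statement that automorphisms close to the identity on a compact set are such compositions, resting on the Andersén–Lempert lemma. Since each factor $\varphi^{t}_{V}$ is a holomorphic family in $t$ and flows of a fixed $V$ add in the time variable, the scalar Cousin~I splitting $t_i=t_i^B-t_i^A$ over the Stein sets $A$, $B$ (solvable with parameters by Cartan's Theorem~B) yields $\varphi^{t_i}_{V_i}=\varphi^{t_i^B}_{V_i}\circ(\varphi^{t_i^A}_{V_i})^{-1}$, a product of an $X$-holomorphic map over $B$ and the inverse of one over $A$. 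Reassembling the $N$ split factors into one decomposition $c=b\cdot a^{-1}$ requires commuting the $A$-factors past the $B$-factors; as everything is close to $\Id$, the commutator discrepancies are of higher order and are absorbed by a finite Newton-type recursion, at each stage of which the leftover error is re-expressed as a small composition of flows with $X$-holomorphic times. \textbf{Making this iteration converge with the right estimates — a quantitative parametric Andersén–Lempert theory over a Stein base — is the technical core of the argument.}

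For the algebraic groups $\Auta\C^n$, $\Autaone\C^n$, $\Autasp\C^n$ the same scheme runs with the algebraic Andersén–Lempert lemma in place of the holomorphic one (every polynomial vector field, respectively one of vanishing divergence or preserving the symplectic form, is a finite sum of complete \emph{polynomial} vector fields), so that all flows and all splittings stay among polynomial automorphisms. The only loss is in interpolation: displacing a point by a polynomial automorphism costs a definite increment of degree, so realizing prescribed jets at $N$ points needs a composition of order $N$ flows, whose degree grows with $N$; the exhaustion argument of the first paragraph therefore produces a polynomial automorphism only when $X_0$ is finite.

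The last statement recovers discrete $X_0$ by invoking the Jung–van der Kulk theorem: $\Auta\C^2=A*_{A\cap J}J$, where $A=\mathrm{GL}_2(\C)\ltimes\C^2$ is the affine group and $J$ the de Jonquières group, and in a reduced word the degrees of the non-affine factors multiply to the degree of the product. Hence a map $X\to\Auta\C^2$ of degree $\le d$ has word length bounded in terms of $d$ with all non-affine factors of degree $\le d$; since $A$ and the bounded-degree piece $J_{\le d}\cong(\C^*)^2\times\C^{d+2}$ are complex Lie groups, hence Oka, one can carry out the bumping-and-splitting scheme inside a fixed finite-dimensional configuration of bounded word length, splitting each affine and each triangular factor within its own Lie group by Cartan's Theorem~B, with the degree — and therefore the word length and the degree of the correction $h$ — staying bounded throughout, which is exactly what makes discrete $X_0$ admissible here. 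The extra care needed because a reduced word can degenerate as the parameter varies is handled by performing the constructions over the finite stratification of $P\times X$ by word type and gluing; this is where the $n=2$ argument is most delicate. (For $n=1$ the group $\Aut\C$ is the finite-dimensional complex Lie group of affine maps and the statement is the classical Oka principle, which is why one takes $n>1$.)
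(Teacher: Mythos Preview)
Your proposal follows the standard Oka-principle template (Cartan pairs, splitting lemma, bumping induction), but the paper takes a more direct route that sidesteps your acknowledged ``technical core'' entirely. The key observation you are missing is that after the Anders\'en--Lempert decomposition of a polynomial vector field as $W(x,z)=\sum_j b_j(x)\,W_j(z)$ with \emph{fixed} shear fields $W_j$ and scalar coefficients $b_j\in\cO(U)$, one simply applies the Oka--Weil theorem to approximate each $b_j$ by $\tilde b_j\in\cO(X)$ globally. Since the flow of $b(x)W_j$ is $\phi_{tb(x)}$ for the one-parameter group $\phi_t$ of $W_j$, the concatenation of the flows of the $\tilde b_j(x)W_j$ is already an $X$-holomorphic map $X\to\cG$ approximating $F$ on $K$: no Cartan pairs, no Cousin splitting of time functions, no Newton iteration to absorb commutators. (The reduction to this situation is via the Schwarz subgroup $\cS\cG$ and Grauert's Oka principle for the linear part $\cG_0$.) Your splitting lemma might in principle be made to converge, but you yourself flag the iteration as unresolved, and in this problem it is simply unnecessary: the group structure of $\cG$ is rich enough that the global approximant is written down explicitly as a composition of shears with globally defined coefficients.

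For interpolation the paper again proceeds more directly than you propose, and in the opposite order. Rather than approximate first and then correct the values at the $x_j$ --- which would require you to build, parametrically and $X$-holomorphically, a map $h$ into $\cG$ with prescribed values on a discrete set and close to $\Id$ on $K$, itself a nontrivial task that your sketch does not address --- the paper first proves an independent interpolation result (Lemma~\ref{lem:entire}): any map from a discrete set into $\cG$ extends to a holomorphic map $X\to\cG$, constructed as a convergent infinite composition in $\cS\cG$ using the contraction $F^t(z)=t^{-1}F(tz)$. One then composes the given $F$ with the inverse of such an interpolant $G$, so that $H=F\circ G^{-1}$ satisfies $H(x_k)=\Id$ for all $k$; rerunning the approximation argument, all the generating vector fields vanish at the $x_k$, so their scalar coefficients do too, and the Oka--Weil approximants can be chosen to vanish there as well --- interpolation is preserved for free. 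Your $\Auta\C^2$ sketch is closer in spirit to the paper's (both rest on Jung--van der Kulk), though the paper uses the Friedland--Milnor polydegree stratification and shows each stratum $G[d_1,\dots,d_m]$ is an Oka manifold (a bundle over $\P_1\times\P_1$ with fibre a product of copies of $\C$ and $\C^*$), which makes bounded-degree interpolation on discrete sets immediate without tracking degenerations of word type.
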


Our proof uses Grauert's Oka principle for complex Lie groups, but not the modern Oka theory initiated by Gromov.  Anders\'en-Lempert theory is of course the key to proving approximation.  No such tool exists for interpolation.  We prove the following interpolation result with bare hands, except that for $\Auta\C^2$ we use some structure theory that partly fails and partly is unknown in higher dimensions.

\begin{theorem}\label{th:interpol}
Let $\cG$ be one of the subgroups of $\Aut\C^n$, $n>1$, on the list (\ref{eq:groups}) and let $X$ be a Stein space.  Let $D$ be a discrete subset of $X$ and let $f:D\to\cG$ be a map.  If $\cG$ is $\Aut\C^n$, $\Autone\C^n$, or $\Autsp\C^n$, then $f$ extends to a holomorphic map $X\to\cG$.  

If $\cG=\Auta\C^2$ and $\deg f:D\to\N$ is bounded, then $f$ extends to a holomorphic map $X\to\cG$.

If $\cG$ is $\Auta\C^n$, $\Autaone\C^n$, or $\Autasp\C^n$, and $D$ is finite, then $f$ extends to a holomorphic map $X\to\cG$.
\end{theorem}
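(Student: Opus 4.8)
\medskip
\noindent\emph{A proof proposal.}
The plan is to build the extension $X\to\cG$ as a composition, over the points of $D$, of holomorphic families of automorphisms in $\cG$, each of which equals the prescribed automorphism at its own point of $D$ and equals $\Id$ at every other point of $D$. The elementary building block is this: given $\phi\in\cG$, write the component-wise homogeneous expansion $\phi(z)=\phi(0)+\sum_{k\ge1}P_k(z)$ with $P_k$ homogeneous of degree $k$, and put $\Psi_\phi(t,z)=\phi(0)+\sum_{k\ge1}t^{k-1}P_k(z)=\phi(0)+\tfrac1t\bigl(\phi(tz)-\phi(0)\bigr)$ for $t\ne 0$. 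Since the expansion of $z\mapsto\phi(z)-\phi(0)$ converges on $\C^n$, the homogeneity of $P_k$ shows that $\Psi_\phi$, and likewise $(t,z)\mapsto\Psi_\phi(t,\cdot)^{-1}(z)$, extend holomorphically to all of $\C\times\C^n$. For $t\ne 0$ one has $\Psi_\phi(t,\cdot)=\tau_{\phi(0)}\circ\delta_{1/t}\circ(\phi-\phi(0))\circ\delta_t$ with $\delta_t(z)=tz$ and $\tau_v$ the translation by $v$; conjugation by a dilation and composition with a translation preserve the Jacobian determinant and multiply the symplectic form by a factor that cancels, so $\Psi_\phi(t,\cdot)\in\cG$ for every $t\in\C$. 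At $t=1$ this family equals $\phi$, and at $t=0$ it equals the affine automorphism $A_\phi=\phi(0)+P_1\in\cG\cap\mathrm{Aff}(\C^n)$, where $P_1=d\phi(0)$ is invertible since the Jacobian determinant of $\phi$ is a nonzero constant.

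Next I would enumerate $D=\{a_1,a_2,\dots\}$, set $\phi_j=f(a_j)$ and $A_j=A_{\phi_j}$, and choose for each $j$: a function $s_j\in\cO(X)$ with $s_j(a_j)=1$, vanishing at the remaining points of $D$ and small on a prescribed compact avoiding $a_j$ (possible as $D$ is discrete in a Stein space, by Cartan's theorems); and a holomorphic map $M_j\colon X\to\cG\cap\mathrm{Aff}(\C^n)$ with $M_j(a_j)=A_j$, $M_j(a_k)=\Id$ for $k\ne j$, and $M_j$ close to $\Id$ on a prescribed compact avoiding $a_j$. Such $M_j$ exists by Grauert's Oka principle, in the version with interpolation on a discrete set and approximation on a compact set, since $\cG\cap\mathrm{Aff}(\C^n)$ is a connected complex Lie group, namely one of $\mathrm{GL}_n(\C)\ltimes\C^n$, $\mathrm{SL}_n(\C)\ltimes\C^n$, $\mathrm{Sp}_n(\C)\ltimes\C^n$. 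Now set $G_j(x,z)=M_j(x)\bigl(A_j^{-1}\bigl(\Psi_{\phi_j}(s_j(x),z)\bigr)\bigr)$. Then $G_j(x,\cdot)\in\cG$ for all $x$, $G_j(a_j,\cdot)=\phi_j$, $G_j(a_k,\cdot)=\Id$ for $k\ne j$, and both $G_j(x,\cdot)$ and its inverse can be made arbitrarily close to $\Id$ on any prescribed compact subset of $X\times\C^n$ avoiding $a_j$, by choosing $s_j$ and $M_j-\Id$ small enough there. If $D$ is finite, $F=G_1\circ\cdots\circ G_N$ is holomorphic on $X\times\C^n$, fiberwise in $\cG$, polynomial in $z$ whenever each $\phi_j$ is, and satisfies $F(a_j,\cdot)=\phi_j$ because the remaining factors are the identity at $a_j$; this settles the statement for $\Auta\C^n$, $\Autaone\C^n$, $\Autasp\C^n$ with $D$ finite. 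If $D$ is infinite, after exhausting $X$ and $\C^n$ by compact sets I would make the data above decay so rapidly that $F(x,\cdot)=\bigcirc_{j\ge1}G_j(x,\cdot)$ and the composition of the $G_j(x,\cdot)^{-1}$ in reverse order both converge locally uniformly; the limit is then an $X$-holomorphic map into $\cG$ with $F(a_j,\cdot)=\phi_j$, which settles $\Aut\C^n$, $\Autone\C^n$, $\Autsp\C^n$. This last convergence argument is the parametrized analogue of the standard Anders\'en--Lempert push-out, and is the most laborious, though routine, step.

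The remaining case, $\cG=\Auta\C^2$ with $\deg f$ bounded, needs different input, since an infinite composition of polynomial automorphisms leaves $\Auta\C^2$. Here I would use the Jung--van der Kulk theorem that $\Auta\C^2$ is the amalgamated product of its affine and de Jonqui\`eres subgroups over their intersection: if $\deg f\le d$ on $D$, each $\phi_j$ admits a reduced word whose length and whose factor degrees are bounded in terms of $d$ alone, so after padding with identity factors I may assume $\phi_j=\alpha_{j,1}\circ\beta_{j,1}\circ\cdots\circ\alpha_{j,M}\circ\beta_{j,M}$ for a common $M$, with each $\alpha_{j,i}$ affine and each $\beta_{j,i}$ de Jonqui\`eres of degree $\le d$. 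Since the affine group and the de Jonqui\`eres group of degree $\le d$ are connected complex Lie groups, Grauert's Oka principle provides holomorphic families $x\mapsto\alpha_i(x)$ and $x\mapsto\beta_i(x)$ over $X$ interpolating these factors on $D$, and $F(x,\cdot)=\alpha_1(x)\circ\beta_1(x)\circ\cdots\circ\alpha_M(x)\circ\beta_M(x)$ is the desired extension, of fiberwise degree bounded in terms of $d$. I expect the main obstacle to be exactly this: extracting from the amalgamated-product structure a normal form of uniformly bounded length and complexity for all automorphisms of bounded degree. No such structure theory is available for $\Auta\C^n$ with $n\ge3$ --- Shestakov--Umirbaev's automorphism is wild in dimension $3$, and tameness is open for $n\ge4$ --- which is precisely why only finite $D$ is treated for those groups, through the finite composition $G_1\circ\cdots\circ G_N$ above.
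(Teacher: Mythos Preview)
Your treatment of the holomorphic groups $\Aut\C^n$, $\Autone\C^n$, $\Autsp\C^n$, and of the algebraic groups with finite $D$, is essentially the paper's. The homothety deformation $\Psi_\phi(t,\cdot)$ is exactly the retraction of Lemma~\ref{lem:retract}, and your limit of compositions of correction factors that equal the target at one point of $D$ and the identity at the others, with rapid decay controlled by auxiliary scalar functions, is the content of Lemma~\ref{lem:entire}. The only organisational differences are that the paper first reduces to $X=\C$ (by choosing a holomorphic function $X\to\C$ sending $D$ to the integers) and then builds the interpolating entire curve sequentially as $H_{k+1}=\theta_{k+1}^{h_k}\circ H_k$ with $\theta_{k+1}=\psi_{k+1}\circ H_k(k+1)^{-1}$, rather than as a product of independent factors $G_j$; and the paper splits off the affine part once at the outset and works entirely in $\cS\cG$, whereas you carry the affine correction $M_j\circ A_j^{-1}$ inside each factor. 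Both variants work.

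For $\Auta\C^2$ with bounded degree your route is correct and genuinely different from the paper's. The paper uses the Friedland--Milnor polydegree stratification: it shows that each stratum $G[d_1,\ldots,d_m]$ is an Oka manifold (as a fibre bundle over $\P_1\times\P_1$ with fibre a product of copies of $\C$ and $\C^*$), partitions $D$ into finitely many pieces according to stratum, interpolates on each piece by the Oka property of that stratum, and then glues the finitely many resulting families using the homothety trick together with linear corrections $\beta_j$. Your approach bypasses the Oka property of the strata: from Jung--van der Kulk you extract only the word-length bound $m\le\log_2 d$ and the factor-degree bound $d_i\le d$, and then interpolate each factor separately, using nothing more than that the affine group and the elementary group of degree $\le d$ are connected complex Lie groups. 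This is more elementary; the paper's version, on the other hand, isolates a structural fact (Proposition~\ref{pr:stratified}) of independent interest.
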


Interpolation on arbitrary subvarieties $X_0$ of a Stein space $X$ has emerged as a challenging problem.

There is an interesting contrast between Theorem \ref{th:OP} and a known way in which the Oka principle for complex Lie groups can fail for $\Aut\C^n$, $n>1$.  By Demailly's work on the Serre problem \cite{D}, there is a holomorphic fibre bundle over $\C$ with fibre $\C^2$, whose total space is not Stein.  In particular, the bundle is not holomorphically trivial, even though it is topologically trivial since the base $\C$ is contractible.

Oka properties of $\Aut\C^n$ and its subgroups are related to the so-called {\em parametric Anders\'en-Lempert theorem}.  This point of view originated in the Forstneri\v c-Rosay theorem \cite{FR1993} on the approximation of smooth isotopies $F_t:\Omega\to \Omega_t$, $t\in [0,1]$, of biholomorphic maps between Runge domains in $\C^n$, with $F_0$ the identity map on $\Omega=\Omega_0$, by isotopies of holomorphic automorphisms of $\C^n$.  The proof is similar to that of the Anders\'en-Lempert theorem \cite{A,AL}, the main point being that any polynomial vector field on $\C^n$ is a sum of finitely many polynomial shear vector fields of the form (\ref{eq:SF}) and their $GL_n(\C)$-conjugates.  

A major generalisation appeared in the work of Varolin \cite{Varolin2001, Varolin2000}, who replaced $\C^n$ by a Stein manifold $S$ with the {\em density property}.  The density property means that the Lie algebra generated by all the complete holomorphic vector fields on $S$ is dense in the Lie algebra of all holomorphic vector fields on $S$.  See \cite[\S 4.10]{FF:book} and \cite{KK2011} for surveys of this subject.  A parametric version of the Anders\'en-Lempert theorem was suggested in \cite[Lemma 3.5]{Varolin2001}, which shows that if $V_x$ is a vector field on $S$ depending holomorphically on a parameter $x$ in a Stein space $X$, then $V_x$ can be approximated locally uniformly on $S$ by Lie combinations of complete vector fields which also depend holomorphically on $x$.  This was later made precise by Kutzschebauch \cite{K2005} when both $X$ and $S$ are affine spaces, and very recently by Kutzschebauch and Ramos-Peon \cite[Theorem 2]{KP2014} when $X$ is Stein and $S$ is Stein with the density property. 

It is easily seen that the parametric Anders\'en-Lempert theorem (say as presented in \cite{K2005}) implies that $\Aut\C^n$ satisfies a weak form of BOPA, in which $K$ is a convex compact subset of $X=\C^k$.  This property is called the {\em convex approximation property} and is known to imply the Oka property for complex manifolds \cite{FF2009}.  For $\Aut\C^n$, no such implication is known, and considerable additional work is required in order to prove our main result.

% PRELIMINARIES

\section{Preliminaries}
\label{sec:prelim}

\noindent
In this section we recall some basic notions and prove a couple of preparatory results.  We follow the convention that a map is holomorphic on a compact set in a complex space if it is holomorphic on an unspecified open neighbourhood of the set.  When talking about a homotopy of such maps, it is understood that the neighbourhood is independent of the parameter.

Let $z=(z_1,\ldots,z_n)=(z',z_n)$ denote complex coordinates on $\C^n$.  A {\em shear} on $\C^n$ is an element of $\Aut\C^n$ which is $GL_n(\C)$-conjugate to an automorphism of one of the following two types:
\begin{equation}\label{eq:shear}
F(z)=\bigl(z', z_n+f(z')\bigr), \qquad H(z)=\bigl(z', e^{f(z')} z_n\bigr),
\end{equation}
where $f$ is an entire function on $\C^{n-1}$.  Shears of the first type are called {\em additive}, those of the second kind {\em multiplicative}. The composition of both types of shears is an 
{\em overshear}
\[ (z',z_n) \mapsto \left(z', e^{g(z')} z_n + f(z')\right).\] 
The shears (\ref{eq:shear}) are time-one maps in one-parameter subgroups of $\Aut\C^n$ given by
\begin{eqnarray}
F_t(z) &=& \bigl(z', z_n+tf(z')\bigr), \label{ssheargroup} \\
H_t(z) &=& \bigl(z', e^{t f(z')} z_n\bigr), \label{smsheargroup}
\end{eqnarray}
with $t\in\C$.  The infinitesimal generators of these groups are the entire vector fields
\begin{equation}\label{eq:SF}
V(z)=f(z')\, \frac{\partial}{\partial z_n},\qquad
W(z)=  f(z') z_n\, \frac{\partial}{\partial z_n}.
\end{equation}

Let $\cG$ be one of the subgroups of $\Aut\C^n$ on the list (\ref{eq:groups}).  We let $\cG_0=\cG\cap GL_n(\C)$ be the subgroup of $\cG$ consisting of the linear maps in $\cG$.  Thus $\cG_0$ is one of the groups $GL_n(\C)$,  $SL_n(\C)$, or $Sp_n(\C)$.  Note that for every $F\in \cG$, the derivative $D_z F$ of $F$ at any point $z\in \C^n$ also belongs to $\cG$, hence to $\cG_0$.   We can write every $F\in \cG$ as
\[ F(z)= a+ A H(z), \quad a=F(0)\in \C^n,\ A=D_0 F \in \cG_0, \]
where the automorphism $H\in \cG$ is of the form
\begin{equation}\label{eq:Schwarz}
H(z) = z + \sum_{\vert \alpha\vert=2}^\infty c_\alpha z^\alpha, \qquad  z\in \C^n, \ c_\alpha\in\C^n.
\end{equation}
Here, as usual, $z^\alpha=z_1^{\alpha_1}\cdots z_n^{\alpha_n}$.  We denote by $\cS\cG$ the subgroup of $\cG$ consisting of all elements of the form (\ref{eq:Schwarz}) and call it the {\em Schwarz subgroup of} $\cG$.  (The term is chosen by analogy with the Schwarz class on the disc $\D=\{z\in \C: |z|<1\}$, consisting of all injective holomorphic maps of the form $z\mapsto z+\sum\limits_{k=2}^\infty c_k  z^k$.)

Our next result says that for each of the groups $\cG$ on the list (\ref{eq:groups}), endowed with the compact-open topology, the linear subgroup $\cG_0$ carries the homotopy type of $\cG$, whereas the Schwarz subgroup $\cS\cG$ is contractible.

\begin{lemma}\label{lem:retract}
Let $\cG$ be one of the subgroups of $\Aut\C^n$ on the list (\ref{eq:groups}).  The linear subgroup $\cG_0$ is a strong deformation retract of $\cG$ in such a way that the Schwarz subgroup $\cS\cG$ retracts onto the trivial subgroup.  In particular, $\cS\cG$ is contractible.
\end{lemma}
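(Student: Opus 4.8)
\emph{Approach.} The plan is to deform every $F \in \cG$ to its linear part $D_0 F \in \cG_0$ by a single homotopy that simultaneously kills the translation part and rescales the higher-order terms toward $0$. Recall from the discussion preceding the lemma that $F = \tau_a\circ A\circ H$, where $\tau_a(z) = z+a$ with $a = F(0)$, where $A = D_0 F \in \cG_0$, and where $H \in \cS\cG$ has the form (\ref{eq:Schwarz}). I would introduce the scalings $\lambda_s(z) = sz$ for $s\in(0,1]$ and set $H_s = \lambda_s^{-1}\circ H\circ\lambda_s$; in coordinates $H_s(z) = z + \sum_{|\alpha|\ge 2} c_\alpha s^{|\alpha|-1}z^\alpha$, so $H_s \in \cS\cG$ and $H_s \to \Id$ as $s\to 0^+$. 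Extending by $H_0 = \Id$, I would then define
\[ R\colon \cG\times[0,1]\to\cG, \qquad R(F,t) = \tau_{(1-t)a}\circ A\circ H_{1-t}. \]
By inspection $R(F,0) = F$, $R(F,1) = A = D_0 F\in\cG_0$, and $R(A,t) = A$ for every linear $A\in\cG_0$ (then $a = 0$ and $H = \Id$); moreover, if $F\in\cS\cG$ then $a = 0$ and $A = \Id$, so $R(F,t) = H_{1-t}$ stays in $\cS\cG$ and runs from $F$ to $\Id$. Hence, once $R$ is shown to be a well-defined continuous $\cG$-valued map, it is the desired strong deformation retraction of $\cG$ onto $\cG_0$, and its restriction to $\cS\cG$ contracts $\cS\cG$ to $\{\Id\}$, giving the final assertion.

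\emph{Key steps.} First I would check that $R$ really takes values in $\cG$ for each of the six groups. This reduces to two facts: translations $\tau_a$ lie in $\cG$ (they preserve the Jacobian, the symplectic form $\sum dz_{2j-1}\wedge dz_{2j}$, and polynomiality), and conjugation by a scaling $\lambda_s$ maps $\cG$ into itself. The latter holds because $D_z(\lambda_s^{-1}\circ H\circ\lambda_s) = \lambda_s^{-1}\circ D_{\lambda_s z}H\circ\lambda_s$ has the same determinant as $D_{\lambda_s z}H$, while $\lambda_s^*$ multiplies the symplectic form by the constant $s^2$, which cancels after conjugation, and conjugation by a linear map preserves polynomiality and degree. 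Second, I would verify continuity for the compact-open topology. Evaluation $F\mapsto F(0)$ and, via Cauchy estimates, differentiation $F\mapsto D_0 F$ are continuous, and inversion is continuous on the linear group $\cG_0$, so $F\mapsto(a,A,H)$ with $H = A^{-1}\circ(F-a)$ is continuous. The remaining point is joint continuity of $(F,s,z)\mapsto H_s(z)$ at $s = 0$: writing $H_s(z) = z + s\sum_{|\alpha|\ge 2} c_\alpha s^{|\alpha|-2}z^\alpha$, the exponents $|\alpha|-2$ are $\ge 0$, and since $H$ is entire, Cauchy estimates bound the degree-$k$ homogeneous part of $H$ on $\{|z|\le R\}$ by $\mathrm{const}\cdot(R/R')^k$ for $R<R'$, with the constant locally bounded in $F$ because $H$ varies continuously. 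Hence the series converges locally uniformly in $(F,s,z)$ on $\cG\times[0,1]\times\C^n$, so $H_s(z)$, and therefore $R(F,t)(z)$, is jointly continuous.

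\emph{Main obstacle.} The conceptual content here is slight; the one place that needs care is this last point — establishing that the rescaled maps $H_s = \lambda_s^{-1}\circ H\circ\lambda_s$ converge to the identity continuously and \emph{locally uniformly in $F$} as $s\to 0$, so that the homotopy $R$ is continuous up to and including the endpoint $t = 1$. Everything else is routine verification, carried out uniformly across the six groups on the list (\ref{eq:groups}).
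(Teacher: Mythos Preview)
Your proof is correct and is essentially the paper's own argument, written out with more detail: the paper's single formula $(F,t)\mapsto\bigl(z\mapsto tF(0)+t^{-1}(F(tz)-F(0))\bigr)$, when expanded via $F=a+AH$, becomes exactly $ta+A\,H_t(z)$, which is your $R(F,1-t)$. The only difference is cosmetic---you first decompose $F$ and then assemble the homotopy from pieces, and you reverse the direction of the parameter---so the two proofs are the same.
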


\begin{proof}
Consider the map
\[ \cG \times (0,1] \to \cG, \quad (F,t)\mapsto \big(z\mapsto tF(0) + t^{-1}(F(tz)-F(0))\big), \]
with $(F,1)\mapsto F$.  It is continuous with respect to the compact-open topology on $\cG$ and extends continuously to a map $\cG \times [0,1] \to \cG$ with $(F,0)\mapsto D_0 F$.  Note that if $F$ is linear, then $(F,t)\mapsto F$ for all $t\in[0,1]$.  Also, if $F\in\cS\cG$, then the image of $(F,t)$ is in $\cS\cG$ for all $t\in [0,1]$, and $(F,0)\mapsto\Id$. 
\end{proof}
 
Note that if $f:Y\to\cG$ is a continuous map from a topological space $Y$ into one of the groups in (\ref{eq:groups}), then the induced map $D_0 f:Y\to\cG_0$, $y\mapsto D_0 f(y)$, is also continuous.

\begin{lemma}\label{lem:two-conditions}
Let $\cG$ be one of the subgroups of $\Aut\C^n$ on the list (\ref{eq:groups}).  Let $Y$ be a closed subspace of a topological space $X$, and $f:Y\to\cG$ be a continuous map.  If $f$ extends continuously to $X$, so does $D_0 f$.  Conversely, if $D_0 f$ extends continuously to $X$ and $Y\hookrightarrow X$ is a cofibration, then $f$ extends continuously to $X$. 
\end{lemma}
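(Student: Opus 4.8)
The plan is to reduce the lemma to the single structural fact that, as a topological space, $\cG$ is a product of the linear subgroup $\cG_0$ with a contractible factor. The first assertion is then immediate: if $\bar f:X\to\cG$ is a continuous extension of $f$, compose it with the continuous map $\cG\to\cG_0$, $F\mapsto D_0F$ (whose continuity was recorded just before the statement) to obtain a continuous extension $D_0\bar f$ of $D_0f$.

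For the converse, the first step is to make this product decomposition explicit. Every group $\cG$ on the list (\ref{eq:groups}) contains all translations $z\mapsto z+a$ of $\C^n$: this is clear for $\Aut\C^n$, and a translation has Jacobian $1$, preserves the constant-coefficient holomorphic symplectic form, and is polynomial, which covers the other five cases. Consequently, for $F\in\cG$ the automorphism $z\mapsto(D_0F)^{-1}\bigl(F(z)-F(0)\bigr)$ --- a composition of $F$, a translation, and an element of $\cG_0$ --- again lies in $\cG$, and since it has the form $z+O(|z|^2)$ it lies in the Schwarz subgroup $\cS\cG$. This yields a map
\[ \Phi:\cG\longrightarrow\C^n\times\cG_0\times\cS\cG,\qquad F\longmapsto\Bigl(F(0),\ D_0F,\ (D_0F)^{-1}\bigl(F(\cdot)-F(0)\bigr)\Bigr), \]
with two-sided inverse $(a,A,H)\mapsto\bigl(z\mapsto a+AH(z)\bigr)$ (again landing in $\cG$ because translations do). One checks that $\Phi$ is a homeomorphism for the compact-open topologies: that the two maps are mutually inverse follows from $H(0)=0$ and $D_0H=\Id$, while the continuity of $\Phi$ reduces to the continuity of $F\mapsto D_0F$, which holds by the Cauchy estimates.

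The second step is purely homotopy-theoretic. Suppose $D_0f$ extends to a continuous map $g:X\to\cG_0$ and that $Y\hookrightarrow X$ is a cofibration, and write $\Phi\circ f=(a,D_0f,h)$ with $a:Y\to\C^n$ and $h:Y\to\cS\cG$ continuous. Now $\C^n$ is contractible, $\cS\cG$ is contractible by Lemma \ref{lem:retract}, and any continuous map from any space into a contractible space extends over a cofibration --- homotope it to a constant, extend the constant trivially, and transport the extension along by the homotopy extension property. Hence $a$ and $h$ extend to continuous maps $\bar a:X\to\C^n$ and $\bar h:X\to\cS\cG$, and $\bar f:=\Phi^{-1}\circ(\bar a,g,\bar h):X\to\cG$ is a continuous extension of $f$. (Equivalently, one is solving a lifting problem for the cofibration $Y\hookrightarrow X$ against the map $\cG\to\cG_0$, $F\mapsto D_0F$, which via $\Phi$ is the projection of a trivial bundle with contractible fibre $\C^n\times\cS\cG$.) I do not anticipate a genuine obstacle here; the only points calling for a little care are the continuity of $\Phi$ and $\Phi^{-1}$ in the compact-open topology and the routine check that translations belong to each of the six groups --- everything else is formal.
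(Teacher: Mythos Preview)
Your proof is correct and follows essentially the same approach as the paper: decompose $f$ into its centre $f(\cdot)(0)\in\C^n$, its linear part $D_0f\in\cG_0$, and its Schwarz part $(D_0f)^{-1}(f-f(0))\in\cS\cG$, extend each separately (the last using that $\cS\cG$ is contractible and $Y\hookrightarrow X$ is a cofibration), and reassemble. The paper's proof is slightly terser --- it does not package the decomposition as an explicit homeomorphism $\Phi$ --- but the content is the same.
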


\begin{proof}
The first claim is evident.  As for the second, consider the continuous map 
\[ h:Y\to\cS\cG, \quad h(y)=(D_0 f(y))^{-1}\big(f(y)-f(y)(0)\big). \]
Since $Y\hookrightarrow X$ is a cofibration and $\cS\cG$ is contractible by Lemma \ref{lem:retract}, $h$ extends to a continuous map $H:X\to\cS\cG$.  Also, the map $y\mapsto f(y)(0)$ extends to a continuous map $O:X\to\C^n$.  And by assumption, $D_0 f$ extends to a continuous map $D:X\to\cG_0$.  Then $O+D H:X\to\cG$ is a continuous extension of $f$.
\end{proof}

If $X$ is a complex space and $Y$ is a closed subvariety of $X$, then $Y\hookrightarrow X$ is a cofibration.  If $U$ is a neighbourhood of a compact subset $K$ of a complex space $X$, then there is a neighbourhood $V$ of $K$ such that $\overline V\subset U$ and $\overline V\hookrightarrow X$ is a cofibration.  This follows from the triangulability of reduced complex spaces and their subvarieties \cite{Hardt1976, Hironaka1975}, and the fact that the inclusion of a subcomplex in a CW complex is a cofibration.

%  BOPA

\section{The basic Oka property with approximation}
\label{sec:BOPA}

\noindent
In this section we prove that each of the groups $\cG$ in (\ref{eq:groups}) satisfies the basic Oka property with approximation.  Recall that given a holomorphic map $F:X\to \cG$, we denote by $D_0 F(x)\in \cG_0= \cG\cap GL_n(\C)$ the derivative of the holomorphic map $F(x):\C^n \to\C^n$ at $0\in \C^n$.

\begin{theorem}  \label{th:BOPA}
Let $\cG$ be one of the subgroups of $\Aut\C^n$, $n>1$, in (\ref{eq:groups}).  Assume that $X$ is a Stein space, $K$ is a compact $\cO(X)$-convex subset of $X$, and $F: U\to \cG$ is a holomorphic map on a neighbourhood $U$ of $K$ in $X$, such that the map $U\to\cG_0$, $x \mapsto D_0 F(x)$, extends to a continuous map $X\to \cG_0$.  Given a compact set $B\subset \C^n$ and a number $\epsilon>0$, there is a neighbourhood $U_0\subset U$ of $K$ and a homotopy of holomorphic maps $F_s: U_0\to \cG$, $s\in [0,1]$, such that $F_0=F\vert U_0$, $F_1$ extends to a holomorphic map $X\to \cG$, and 
\[  \sup\{\vert F_s(x,z)-F(x,z) \vert : x\in  K,\ z\in B,\ s\in [0,1]\} < \epsilon. \]
\end{theorem}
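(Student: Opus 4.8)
The plan is to split $F$ into a translation part, a linear part, and a Schwarz part, treat the first two by classical means, and reduce the real content to an approximation statement for the Schwarz part via Anders\'en-Lempert theory. Concretely, write $F(x)(z)=a(x)+A(x)\,H(x)(z)$ with $a(x)=F(x)(0)\in\C^n$, $A(x)=D_0F(x)\in\cG_0$, and $H(x)\in\cS\cG$, all holomorphic on a neighbourhood of $K$. The map $a$ is approximated uniformly on $K$ by a holomorphic map $\wt a\colon X\to\C^n$ (Oka-Weil, since $K$ is $\cO(X)$-convex), with the straight-line homotopy. The map $A$ is holomorphic near $K$ and, by hypothesis, extends to a continuous map $X\to\cG_0$; since $\cG_0$ is one of the complex Lie groups $GL_n(\C)$, $SL_n(\C)$, $Sp_n(\C)$, Grauert's Oka principle furnishes a homotopy, through maps holomorphic near $K$ and staying uniformly close to $A$ on $K$, from $A$ to a holomorphic map $\wt A\colon X\to\cG_0$. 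Note that the intermediate maps in the required homotopy need only be holomorphic on a neighbourhood of $K$; only $F_1$ must be defined on all of $X$.

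For the Schwarz part there is no topological obstruction, because $\cS\cG$ is contractible by Lemma \ref{lem:retract}; an explicit contraction is $H_t(x)(z)=t^{-1}H(x)(tz)$ for $t\in(0,1]$, $H_0(x)=\Id$, a holomorphic (in $(x,z)$, for each $t$) family in $\cS\cG$ joining $\Id$ to $H$, continuous up to $t=0$ since $H(x)(w)=w+O(|w|^2)$. I would then invoke the parametric Anders\'en-Lempert theorem (in the form of \cite{K2005}, or \cite{KP2014} for a Stein parameter space, with the volume-preserving, symplectic, and algebraic refinements appropriate to the six groups) to approximate this isotopy, uniformly on $[0,1]\times K\times B'$ for a large compact $B'\supset B$, by an isotopy $\Psi_t$ of automorphisms in $\cG$, holomorphic in $x$ on a neighbourhood $U_0$ of $K$ and continuous in $t$, with $\Psi_0=\Id$ and each $\Psi_t$ a finite composition of shears whose entire defining functions on $\C^{n-1}$ depend holomorphically on $x\in U_0$ (and, in the algebraic cases, are polynomials in $z'$ of controlled degree); for the groups other than $\Aut\C^n$ this rests on the fact that the relevant constrained vector fields are finite sums of constrained shear fields (\ref{eq:SF}) and their $\cG_0$-conjugates. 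Since each such defining function is holomorphic on the Stein product $U_0\times\C^{n-1}$ with $K\times\{|z'|\le R\}$ holomorphically convex, Oka-Weil (applied coefficientwise in the algebraic cases, so as to keep the degree) replaces it by a function holomorphic in $x\in X$ and entire in $z'$; composing the globalised shears yields a holomorphic map $\wt H\colon X\to\cG$, and with the errors chosen small enough---their propagation through the finitely many compositions being controlled by uniform continuity on the compact sets in play---$\wt H$ is uniformly close to $H$ on $K\times B$.

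The map $F_1:=\wt a+\wt A\,\wt H\colon X\to\cG$ is then the desired global approximant: for $x\in K$, $z\in B$, the difference $F_1(x)(z)-F(x)(z)$ splits into $\wt a(x)-a(x)$, $\bigl(\wt A(x)-A(x)\bigr)H(x)(z)$, and $\wt A(x)\bigl(\wt H(x)(z)-H(x)(z)\bigr)$, each small by the above together with the boundedness of $A$ and of $H$ on $K\times B$. For the homotopy I would concatenate two stages. First, keeping $a$ and $A$ fixed, deform $H$ to $\wt H=\Psi_1$ by $\Theta_s(x):=\Psi_1(x)\circ\Psi_{1-s}(x)^{-1}\circ H_{1-s}(x)$, $s\in[0,1]$: this equals $H$ at $s=0$ and $\wt H$ at $s=1$, and stays uniformly close to $H$ on $K\times B'$ throughout because $\Psi_{1-s}\approx H_{1-s}$, hence $\Psi_{1-s}^{-1}\approx H_{1-s}^{-1}$ on slightly smaller sets, uniformly in $s$. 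Second, keeping $\wt H$ fixed, deform $a$ to $\wt a$ (straight line) and $A$ to $\wt A$ (Grauert homotopy) simultaneously; both stay close to $a$, respectively $A$, on $K$, so the family $a_u+A_u\wt H$ stays close to $F$ on $K\times B$. All intermediate maps are holomorphic on $U_0$, and $F_1$ is holomorphic on $X$.

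The hard part is everything hidden inside the phrase ``invoke the parametric Anders\'en-Lempert theorem'': getting that theorem, with the shear generators depending holomorphically on the parameter, for the subgroups $\Autone\C^n$ and $\Autsp\C^n$ and for the three algebraic groups, and---if one has only the version for a convex compact in $\C^k$---upgrading it to an arbitrary Stein base $X$ with $\cO(X)$-convex $K$ by the usual Oka-principle induction over a normal exhaustion of $X$ (alternating a convex-case step on bumps with Runge approximation, glued by a Forstneri\v c-Rosay-type splitting lemma for holomorphic automorphisms of $\C^n$). The decomposition $F=a+A\cdot H$ is what makes the topology, carried entirely by $A$ and dispatched by Grauert, separate cleanly from the analysis, carried by $H$ and dispatched by Anders\'en-Lempert.
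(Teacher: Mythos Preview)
Your approach is essentially the paper's: split off the translation and linear parts via Oka--Weil and Grauert, reduce to $\cS\cG$, contract $H$ to the identity by $H_t(x)(z)=t^{-1}H(x)(tz)$, and approximate by Anders\'en--Lempert theory. One point is worth flagging, since it bears directly on what you call the hard part. You worry about upgrading a convex-compact-in-$\C^k$ version of the parametric Anders\'en--Lempert theorem to an arbitrary Stein base by exhaustion and a splitting lemma; the paper sidesteps this entirely. It differentiates the contraction to get the generating field $V(t,x,z)$, freezes $t$ on short subintervals and truncates in $z$ to obtain polynomial fields $W(x,z)=\sum_{\alpha,i} w_{\alpha,i}(x)\,z^\alpha\,\partial/\partial z_i$ with holomorphic coefficients, and then applies the \emph{parameter-free} Anders\'en--Lempert lemma to each monomial field $z^\alpha\,\partial/\partial z_i$. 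This produces $W(x,z)=\sum_j b_j(x)\,W_j(z)$ with the $W_j$ fixed polynomial shear fields and all $x$-dependence concentrated in the scalars $b_j\in\cO(U)$. Globalising to $X$ is then just scalar Oka--Weil on each $b_j$; no induction, no splitting lemma, and no need to globalise functions on $U_0\times\C^{n-1}$. (A small slip: your homotopy $\Theta_s=\Psi_1\circ\Psi_{1-s}^{-1}\circ H_{1-s}$ ends at $\Psi_1$, which lives only on $U_0$, not at the globalised $\wt H$; one more easy stage, interpolating the shear coefficients, is needed. The paper also glosses over the homotopy and refers to \cite[\S 4.9]{FF:book}.)
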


\begin{proof}
We begin by reducing to the case when $F$ is a map into the Schwarz subgroup $\cS\cG$ (\ref{eq:Schwarz}) of $\cG$.  The Taylor expansion of such a map is of the form 
\begin{equation}\label{eq:center0}
F(x,z) = z + \sum_{\vert \alpha\vert=2}^\infty c_\alpha(x) z^\alpha, \qquad  z\in \C^n,
\end{equation}
where the coefficients $c_\alpha(x)$ are holomorphic functions of $x\in U$ with values in $\C^n$.

Let $F: U\to \cG$ be as in the theorem.  For each $x\in U$, we set
\[ a(x) =F(x)(0)=F(x,0) \in \C^n,\qquad A(x)= D_0 F(x)\in \cG_0. \]
We then have 
\[  F(x)= a(x)+ A(x) H(x),\qquad x\in U,  \]
where $H:U \to \cS\cG$ is a holomorphic map of the form (\ref{eq:center0}).  By assumption, $A$ extends to a continuous map $A:X\to \cG_0$.  

By the Oka-Weil  theorem, we can approximate $a: U\to\C^n$ as closely as desired uniformly on $K$ by a holomorphic map $\tilde a:X\to\C^n$.  Also, since $\cG_0$ is a complex Lie group, Grauert's Oka principle \cite{Grauert1957,Grauert1958} implies that we can approximate $A: U\to \cG_0$ as closely as desired uniformly on $K$ by a holomorphic map $\wt A: X\to \cG_0$ which is homotopic to $A$ through a family of holomorphic maps $A_s : U\to \cG_0$, $s\in [0,1]$.

Assume for the moment that the theorem holds for maps into $\cS\cG$ of the form (\ref{eq:center0}). Let $\wt H: X\to \cS\cG$ be a holomorphic map which is homotopic to $H$ over a neighbourhood $U$ of $K$ in $X$ through a family of holomorphic maps $H_s: U\to \cS\cG$, $s\in [0,1]$, and such that every $H_s$ approximates $H_0=H$ uniformly on $K\times B$.  The homotopy of holomorphic map $F_s : U\to \cG$, $s\in [0,1]$, defined by 
\[ F_s(x) = (1-s)a(x) + s\tilde a(x) + A_s(x) H_s(x) \in \cG,\qquad x\in U,  \]
then satisfies the conclusion of the theorem. At $s=1$, we have the holomorphic map $F_1=\tilde a + \wt A \wt H:X\to \cG$.

This shows that it suffices to prove the result for maps into the Schwarz subgroup $\cS\cG$.  Assume that $F: U\to \cS\cG$ is a holomorphic map of the form (\ref{eq:center0}), where $U$ is a neighbourhood of $K$ in $X$. Consider the family of holomorphic maps
\[  F_t(x,z)= t^{-1} F(x,tz) =  z + \sum_{k=2}^\infty t^{k-1} \sum_{\vert \alpha\vert=k} c_\alpha(x) z^\alpha,\]
defined for $x\in U,\ z\in \C^n$, and $t\in \C$. Clearly $F_t(x,\cdot)\in \cS\cG$ for all $t\in \C$ and $x\in U$, and $F_0(x,\cdot)$ is the identity map on $\C^n$. Let 
\begin{equation}\label{eq:vectorfield}
V(t,x,z)= \sum_{|\alpha|=2}^\infty \sum_{i=1}^n v_{\alpha,i} (t,x) \, z^\alpha \frac{\partial}{\partial z_i}
\end{equation}
be the entire vector field on $\C^n$ defined by the flow equation
\begin{equation} \label{eq:floweq}
 \frac{\partial}{\partial t} F_t(x,z) = V(t,x,F_t(x,z)).
\end{equation}
This means that the one-parameter group $\C\to\Aut\C^n$, $t\mapsto F_t(x,\cdot)$, is the flow of $V$; in particular, $F(x,z)=F_1(x,z)$ is obtained by integrating the vector field $V$ over the time interval $t\in [0,1]$, beginning at $t=0$ with the identity map on $\C^n$.  The coefficients $v_{\alpha,i}(t,x)$ of $V$ depend holomorphically on $t\in \C$ and $x\in U$. 

At this point we follow the proof of the Anders\'en-Lempert theorem \cite{AL} as given in \cite{FR1993} and in \cite[\S 4.9]{FF:book}.  (See also \cite{K2005} and  \cite[Theorem 2]{KP2014}.)

We first subdivide the time interval $[0,1]$ into a large number $N$ of subintervals $I_j=[j/N,(j+1)/N]$ for $j=0,1,\ldots,N-1$.  For $t\in I_j$, we replace $V(t,x,z)$ by the autonomous vector field $V(j/N,x,z)$.  If $N$ is large enough, this replacement amounts to an arbitrarily small error in the time-one map of the flow, with uniform estimates on $K\times B$ for any given compact set $B\subset \C^n$.  More precisely, the error at each step is $o(1/N)$, so the total error is $N o(1/N)$, which goes to $0$ as $N\to\infty$.  See \cite[Theorem 4.8.2]{FF:book}.  Hence, to prove Theorem \ref{th:BOPA}, it suffices to consider flows defined by  entire vector fields on $\C^n$ of the form (\ref{eq:vectorfield}) that are autonomous, that is, with the coefficients $v_{\alpha,i} (t,x)$ independent of time $t$.

Next we cut off the series  (\ref{eq:vectorfield}) at some finite-order term.  This amounts to an arbitrarily small error term in the time-one map of the flow provided the degree is sufficiently large. We thus obtain a polynomial vector field
\begin{equation}\label{eq:PVF}
W(x,z)= \sum_{2\leq|\alpha|\leq k}  \sum_{i=1}^n w_{\alpha,i} (x) \, z^\alpha \frac{\partial}{\partial z_i},
\end{equation}
whose coefficients $w_{\alpha,i} (x)$ are holomorphic functions on a neighbourhood of $K$ in $X$.

The remainder of the proof somewhat depends on the group $\cG$.  Assume first that $\cG= \Aut\C^n$.  By the Anders\'en-Lempert  lemma (see \cite{AL} or \cite[Proposition 4.9.3]{FF:book}), each vector field $z^\alpha \partial/\partial z_i$ in (\ref{eq:PVF}) is the sum of finitely many polynomial shear vector fields (\ref{eq:SF}) on $\C^n$.  (The flows of these vector fields are complex one-parameter groups in $\Aut\C^n$ that are $GL_n(\C)$-conjugate to one-parameter groups of the form (\ref{ssheargroup}) and (\ref{smsheargroup}).)  This gives a representation of $W$ as a finite sum 
\begin{equation}\label{eq:PVF2}
W(x,z)= \sum_j  b_j(x) W_j(z),
\end{equation}
where each $W_j$ is a shear vector field on $\C^n$ (conjugate to a vector field of the form (\ref{eq:SF})) and $b_j$ is a holomorphic function on a neighbourhood of $K$ in $X$.

Observe that if $W$ is a complete holomorphic vector field with flow $\phi_t(z)$, $t\in \C$, then for any complex-valued function $b(x)$ of a parameter $x$, the product $b(x) W$ is still a complete vector field with the flow $\phi_{t b(x)}(z)$ for each $x$. If $b$ is holomorphic in $x$, then so is the flow $\phi_{t b(x)}(z)$. This observation applies to every summand $b_j(x) W_j$ in (\ref{eq:PVF2}). 

We now apply  the Oka-Weil theorem to approximate each of the coefficients $b_j$ in (\ref{eq:PVF2}), uniformly on $K$, by a holomorphic function $\wt b_j$ on $X$.  This gives a polynomial vector field $\wt W(x,z)= \sum_j \wt b_j(x) W_j(z)$ with coefficients $\wt b_j\in\cO(X)$, which approximates $W$ uniformly on $K$.  A suitable concatenation of flows of the shear vector fields  $\wt b_j(x) W_j(z)$ furnishes automorphisms $\wt F(x)\in \cS\Aut\C^n$ of the form
\[ \wt F(x) = \Phi_1(x) \circ \Phi_2(x) \circ \cdots \circ\Phi_N(x),\qquad x\in X, \]
such that each $\Phi_j:X\to \Aut\C^n$ is a holomorphic map into the polynomial shear subgroup of $\Aut\C^n$, and $\wt F$ approximates $F$ on $K$.  (The fact that $\tilde F$ maps into the Schwarz subgroup follows from the vanishing of our vector fields to second order at $z=0$; however, all we need here is an approximation of $F$ with values in the whole group.)  A homotopy from $\wt F$ to $F$ is obtained by introducing a parameter in each of the shear groups which at time 1 give the maps $\Phi_1,\ldots,\Phi_N$.  The details can be found in \cite[\S 4.9]{FF:book}.  This completes the proof of Theorem \ref{th:BOPA} when $\cG=\Aut\C^n$.

Suppose next that $\cG=\Autone\C^n$. In this case the vector field $V(t,x,\cdot)$ on $\C^n$ (\ref{eq:vectorfield}) for fixed $t\in \C$ and $x\in U$  has divergence zero with respect to the standard holomorphic volume form $\omega=dz_1\wedge\cdots\wedge dz_n$ on $\C^n$. Since every vector field $W$ (\ref{eq:PVF}) under consideration, before being replaced by one of its Taylor polynomials, is of the form $V(t_j,\cdot)$ for some $t_j=j/N$, it also has divergence zero.  Recall that the divergence equals 
\[ \operatorname{div}_\omega \left( \sum_{i=1}^n c_i(z) \frac{\partial}{\partial z_i} \right) = \sum_{i=1}^n \frac{\partial c_i(z)}{\partial z_i}. \]
Applying this to the vector field $W$ (\ref{eq:PVF}) we get
\[ \operatorname{div} W(x,z) = \sum_{k\ge 2} \sum_{\vert \alpha\vert =k} \sum_{i=1}^n w_{\alpha,i} (x) \, \alpha_i z^{\alpha-e_i} =0. \]
Here $e_i$ denotes the $n$-vector whose $i$-th entry equals $1$ and all other entries equal $0$. It follows that the sum of the homogeneous terms of each degree $k$ in the Taylor expansion of $W$, and hence any Taylor polynomial of $W$, has divergence zero for each fixed $x\in U$.  By Anders\'en's lemma (see \cite{A} or \cite[Lemma 4.9.5]{FF:book}), the finite-dimensional complex vector space of all homogeneous divergence-zero vector fields on $\C^n$ of any fixed degree $k$ admits a basis consisting of shear vector fields of the form $f(z')\partial/\partial z_n$ and their $SL_n(\C)$-conjugates.  If a vector field in this space depends holomorphically on a parameter $x$, then its coefficients with respect to this basis will also depend holomorphically on $x$, since they are $\C$-linear combinations of the coefficients of the same vector field when expressed in the standard basis for $\C^n$. Hence the vector field $W$ (\ref{eq:PVF}) is a finite linear combination $W(x,z) = \sum_{j} g_j(x) W_j(z)$ of divergence-zero polynomial vector fields $W_j$ on $\C^n$ with coefficients $g_j(x)$ that are holomorphic functions of $x \in U\subset X$.  We complete the proof exactly as before, approximating the coefficients $g_j$ by holomorphic functions $\tilde g_j$ on $X$, and then approximating the flow of the vector field $\wt W(x,z) = \sum_{j} \tilde g_j(x) W_j(z)$ by compositions of shears of the form (\ref{ssheargroup}) and their $SL_n(\C)$-conjugates.  This completes the proof of Theorem \ref{th:BOPA} for $\cG=\Autone\C^n$.

Every element $F\in \Auta\C^n$ of the polynomial automorphism group has constant Jacobian.  Using the Oka principle for maps into $\C^*$, it is easy to reduce to the case when the Jacobian equals $1$.  We then proceed as in the case $\cG=\Autone\C^n$, approximating $F$ by a holomorphic map $\wt F: X\to \Autaone\C^n$ with image in the polynomial shear subgroup of $\Autaone\C^n$.

If $n=2m$ is even and $\cG$ is $\Autsp\C^n$ or $\Autasp\C^n$, the vector fields $V$ (\ref{eq:vectorfield}) and $W$ (\ref{eq:PVF}) are Hamiltonian with respect to the symplectic form $\omega=\sum\limits_{j=1}^m  dz_{2j-1}\wedge dz_{2j}$ (see \cite{FF1996}).  By essentially the same argument as for $\Autone\C^n$, using the fact that a polynomial Hamiltonian vector field can be written as a finite sum of polynomial Hamiltonian shear vector fields \cite[Proposition 5.2]{FF1996}, we complete the proof just as in the volume-preserving case.
\end{proof}

% POPA

\section{The parametric Oka property with approximation}
\label{sec:POPA}

\noindent
In this section we prove the following parametric version of Theorem \ref{th:BOPA}.

\begin{theorem} \label{th:POPA}
Let $\cG$ be one of the subgroups of $\Aut\C^n$, $n>1$, in (\ref{eq:groups}), and let $\cG_0= \cG\cap GL_n(\C)$ be its linear subgroup.   Assume that $X$ is a Stein space, $K$ is a compact $\cO(X)$-convex subset of $X$, $U$ is a neighbourhood of $K$ in $X$, $P_0\subset P$ are compact Hausdorff spaces such that $P_0$ is a strong deformation retract of a neighbourhood in $P$, and $F: (P \times U)\cup (P_0\times X) \to \cG$ is a continuous map such that
\begin{enumerate}
 \item[\rm (i)]     $F(p,\cdot) : U \to \cG$ is holomorphic for every $p\in P$, 
 \item[\rm (ii)]    $F(p,\cdot):X\to \cG$ is holomorphic for every $p\in P_0$, and 
 \item[\rm (iii)]   $(P \times K)\cup (P_0\times X)\to\cG_0$, $(p,x) \mapsto D_0 F(p,x)$, extends to a continuous map $A:P\times X\to \cG_0$.
\end{enumerate}
Given a compact set $B\subset \C^n$ and a number $\epsilon>0$, there is a neighbourhood $U_0\subset U$ of $K$ and a homotopy $F_s: (P \times U_0)\cup (P_0\times X) \to \cG$, $s\in [0,1]$, with the same properties as $F=F_0$, such that
\begin{enumerate}
 \item   $F_1(p,\cdot): X\to \cG$ is holomorphic for every $p\in P$, 
 \item   $F_s(p,x)=F(p,x)$ for every $(p,x) \in P_0\times X$ and $s\in [0,1]$, and 
 \item   $\sup\{\vert F_s(p,x,z)-F(p,x,z) \vert : s\in [0,1],\ p\in P,\ x\in  K,\ z\in B\} < \epsilon.$
\end{enumerate}
\end{theorem}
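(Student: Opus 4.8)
The plan is to deduce the parametric statement from the basic case (Theorem \ref{th:BOPA}) by the same two-step scheme used there: first reduce to maps into the Schwarz subgroup $\cS\cG$, handling the linear part and the translation part separately, and then run the parametric Anders\'en--Lempert construction. The advantage of the Schwarz subgroup is that, being contractible by Lemma \ref{lem:retract}, it behaves well with respect to extension and homotopy over the cofibration $P_0\hookrightarrow P$.

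First I would split $F(p,x)$ as $F(p,x)=a(p,x)+A(p,x)H(p,x)$, where $a(p,x)=F(p,x)(0)\in\C^n$, $A(p,x)=D_0F(p,x)\in\cG_0$, and $H(p,x)\in\cS\cG$ is of the form (\ref{eq:center0}); all three are continuous on $(P\times U)\cup(P_0\times X)$, holomorphic in $x$ where $F$ is. The term $a$ is a continuous map into $\C^n$, $X$-holomorphic on $P_0\times X$ and on $P\times U$, so by the parametric Oka--Weil theorem (Stein space, $\cO(X)$-convex $K$, cofibration $P_0\hookrightarrow P$) it admits a homotopy rel $P_0\times X$ to a map that is $X$-holomorphic over all of $P\times X$, with arbitrarily small error on $P\times K$. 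The term $A$ is a continuous map $P\times X\to\cG_0$ by hypothesis (iii), $X$-holomorphic over $P\times U$ and $P_0\times X$; since $\cG_0$ is a complex Lie group, the parametric Grauert Oka principle gives a homotopy rel $P_0\times X$ to an $X$-holomorphic map $\wt A:P\times X\to\cG_0$ approximating $A$ on $P\times K$, through $X$-holomorphic maps over $P\times U$. So it remains to treat $H:(P\times U)\cup(P_0\times X)\to\cS\cG$, with $D_0H\equiv\Id$; assembling the three homotopies linearly in $s$ (as in the basic proof) then yields the conclusion for $F$.

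For the Schwarz part I would follow the basic proof parametrically. Writing $H_t(p,x,z)=t^{-1}H(p,x,tz)$, the flow equation (\ref{eq:floweq}) produces an entire vector field $V(t,p,x,z)$ of the form (\ref{eq:vectorfield}) whose coefficients depend continuously on $(t,p,x)$ and holomorphically on $x$; subdividing $[0,1]$ into $N$ steps, freezing $t$ at $j/N$, and truncating the Taylor series gives polynomial vector fields $W_j(p,x,z)$ continuous in $p$, holomorphic in $x$, lying in the appropriate constrained class (arbitrary, divergence-zero, or Hamiltonian) according to $\cG$. The key point, exactly as in Theorem \ref{th:BOPA}, is that the Anders\'en--Lempert / Anders\'en / Forstneri\v c decomposition into shear vector fields is \emph{linear} with fixed shear fields $W_\nu(z)$ and varying scalar coefficients; hence $W_j(p,x,z)=\sum_\nu b_{j,\nu}(p,x)W_\nu(z)$ with $b_{j,\nu}$ continuous in $p$, holomorphic in $x$ on a neighbourhood of $K$, and $X$-holomorphic over $P_0\times X$ (since $H$ already is there). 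Applying the parametric Oka--Weil theorem to each coefficient $b_{j,\nu}$, rel $P_0\times X$, and concatenating the corresponding shear flows yields $\wt H:P\times X\to\cS\cG$ approximating $H$ on $P\times K\times B$, joined to $H$ by introducing a time parameter in each shear one-parameter group, the whole homotopy fixed on $P_0\times X$.

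The main obstacle I expect is \emph{bookkeeping the parameter $P_0$ throughout}: every application of an extension/approximation result must be done relative to $P\times X_0$-type data with $P_0\hookrightarrow P$ a cofibration, and one must verify that the vector-field coefficients $b_{j,\nu}(p,x)$ really are already $X$-holomorphic over $P_0\times X$ so that the deformation can be kept fixed there. This needs the parametric Oka--Weil theorem and the parametric Grauert Oka principle in the stated form (Stein base, $\cO(X)$-convex $K$, cofibration of parameter pairs); both are classical, the latter being Grauert's theorem applied with parameters, which is available since $\cG_0$ is a genuine complex Lie group. The remaining ingredients --- the error estimates for freezing time and truncating degree (\cite[Theorem 4.8.2]{FF:book}), and the linearity of the shear decompositions --- are quoted verbatim from the proof of Theorem \ref{th:BOPA} and require no change. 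The only genuinely new technical care is ensuring all the homotopies patch continuously across the overlap $P_0\times U$ of the two pieces of the domain of $F$, which follows because on that overlap $F$ is $X$-holomorphic and every step above restricts to the corresponding step in the basic proof.
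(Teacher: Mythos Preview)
Your reduction to the Schwarz subgroup via the parametric Grauert principle is exactly what the paper does, and your observation that the shear decomposition is linear so that the coefficients $b_{j,\nu}(p,x)$ inherit continuity in $p$ and $X$-holomorphicity over $P_0\times X$ is correct. The gap is in the last step, where you claim that after applying parametric Oka--Weil to the $b_{j,\nu}$ rel $P_0\times X$ and concatenating the shear flows, the resulting homotopy is \emph{fixed} on $P_0\times X$.

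It is not. The Anders\'en--Lempert procedure is an approximation scheme: freezing the time variable on subintervals, truncating the Taylor expansion, and replacing the flow of a sum by a concatenation of flows (a Lie--Trotter type step) each introduce an error that has nothing to do with the Oka--Weil step. Even if $\tilde b_{j,\nu}(p,x)=b_{j,\nu}(p,x)$ exactly for $p\in P_0$, the composition $\Phi_1(p,x)\circ\cdots\circ\Phi_N(p,x)$ of the associated shear maps only approximates $H(p,x)$; it does not equal it. Hence $\tilde H\neq H$ on $P_0\times X$, and condition (2) fails. Your own remark that ``one must verify that the vector-field coefficients \ldots\ are already $X$-holomorphic over $P_0\times X$ so that the deformation can be kept fixed there'' identifies the wrong obstacle: holomorphicity of the coefficients is not enough, because the passage from coefficients to flows is not an exact inversion of the passage from $H$ to its infinitesimal data.

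The paper resolves this by first reducing to the special case in which $H(p,x,\cdot)=\Id$ for every $p\in P_0$. Then the infinitesimal generator $V$ vanishes identically for $p\in P_0$, hence so do all the shear coefficients, hence the concatenated shear flows are the identity there, which \emph{does} equal $H$ exactly. To reach that special case the paper uses the strong deformation retraction $\tau_s$ of a neighbourhood $V_0\supset P_0$ onto $P_0$ and a cutoff $\rho$ to push $F$ to a nearby map $H$ defined on $(V\times X)\cup(P\times U)$; a second cutoff $\xi$ then gives the factorisation
\[
H(p,x,\cdot)=\bigl(H(p,x,\cdot)\circ H_{\xi(p)}(p,x,\cdot)^{-1}\bigr)\circ H_{\xi(p)}(p,x,\cdot),
\]
where the first factor is already $X$-holomorphic on all of $P\times X$ and the second satisfies $H_{\xi(p)}(p,x,\cdot)=\Id$ for $p\in P_0$. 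Only the second factor needs the Anders\'en--Lempert machine, and there it applies cleanly. This splitting is the one genuinely new idea in the parametric proof; your proposal is missing it.
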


\begin{proof}
As in the proof of Theorem \ref{th:BOPA}, we reduce to the case of maps
\[  F(p,x,z) = z + \sum_{\vert \alpha\vert=2}^\infty c_\alpha(p,x) z^\alpha, \qquad  z\in \C^n, \]
with values in the Schwarz subgroup $\cS\cG$, where $(p,x)\in  (P \times U)\cup (P_0\times X)$. (The neighbourhood $U$ of $K$ may shrink in the course of the proof.)  The coefficients $c_\alpha(p,x)$ are holomorphic in $x$ where defined.  The reduction is accomplished by applying a parametric version of Grauert's Oka principle to the center map $a(p,x)=F(p,x,0)\in \C^n$ and the derivative map $D_0 F(p,x)\in \cG_0$ with respect to the pair of parameter spaces $P_0\subset P$ (see e.g.\ \cite[Theorem 5.4.4]{FF:book}), noting once again that a complex Lie group is an Oka manifold.

Next we observe that the proof of Theorem \ref{th:BOPA} applies verbatim to the present situation if we replace $X$ by $P\times X$ and consider maps  to the group $\cS\cG$ that are $X$-holomorphic, provided that the interpolation condition (2) in Theorem \ref{th:POPA} is replaced by the following condition:
\begin{itemize}
\item[\rm (2')]  $F_s(p,x,z)=z$ for every $p\in P_0$, $x\in X$, $z\in \C^n$, and $s\in [0,1]$.
\end{itemize}
The conclusion provided by the proof of Theorem \ref{th:BOPA} in this situation is that any $X$-holomorphic map $F: P\times U\to \cS\cG$, where $U\subset X$ is a neighbourhood of $K$, satisfying condition (2'), can be approximated uniformly on $P\times K\times B$ by an $X$-holomorphic map $\wt F : P\times X\to \cS\cG$, which is homotopic to $F$ on $(P_0\times X)\cup (P\times U)$ through a family of maps $F_s$ satisfying condition (2').

To see this, note that the entire vector field $V(t,p,x,\cdot)$ on $\C^n$ in the proof of Theorem \ref{th:BOPA} (see (\ref{eq:vectorfield}) and (\ref{eq:floweq})) vanishes for all $p\in P_0$ since the homotopy $F_t$ is fixed there.  The same is then true for the shear vector fields $W_j(p,\cdot)$ (\ref{eq:PVF2}) obtained in the proof. Thus the new approximating homotopy, obtained by composing the flows of these shear vector fields, is also fixed for all $p\in P_0$, and hence equal to the identity map on $\C^n$.

We now turn to the general case. By assumption, there are a neighbourhood $V_0\subset P$ of $P_0$ and a deformation retraction $\tau_s=\tau(s,\cdot) : V_0\to V_0$, $s\in [0,1]$, such that $\tau_0$ is the identity on $V_0$, $\tau_1(V_0)=P_0$, and $\tau(s,p)=p$ for $p\in P_0$ and $s\in [0,1]$.  Choose small neighbourhoods $V\Subset V_1 \Subset V_0$ of $P_0$ and a continuous function $\rho: P\to [0,1]$ such that $\rho=0$ on $V$ and $\rho=1$ on $P\setminus V_1$.  Define a map 
\[ H: (V\times X)\cup (P\times U)\to \cS\cG, \qquad H(p,x,z) = F\bigl( \tau(1-\rho(p),p), x, z). \]
When $p\in V$, we have $\tau(1-\rho(p),p)=\tau(1,p) \in P_0$, so $H$ is well defined. Clearly $H$ is 
continuous and $X$-holomorphic, $H(p,\cdot,\cdot)=F(p,\cdot,\cdot)$ for all $p\in P_0$, and for any compact set $B'\subset \C^n$, the map $H$ is close to $F$ uniformly on $P\times K\times B'$ when the neighbourhood $V_1$ of $P_0$ is chosen small enough.  (The set $B'$ used here may have to be much bigger than the original set $B\subset\C^n$ in the theorem, since the final homotopy $F_s$ will be a composition of several homotopies, and we must ensure sufficiently good approximation on the range of every term in the composition except for the last one.)

Consider the homotopy defined by 
\[ H_t(p,x,z)=t^{-1} H(p,x,tz),\quad (p,x)\in (V\times X)\cup (P\times U), \quad t\in\C. \]
Note that  $H_0(p,x,z)=z$.  Pick a continuous function $\xi : P\to [0,1]$ such that $\xi=0$ on $P_0$ and $\xi=1$ on $P\setminus V$.  Clearly,
\begin{equation}\label{eq:splitting}
H(p,x,\cdot) = H(p,x,\cdot)\circ H_{\xi(p)}(p,x,\cdot)^{-1} \circ H_{\xi(p)}(p,x,\cdot)  \in \cS\cG
\end{equation}
for all $(p,x)\in  (V\times X)\cup (P\times U)$. Observe that the map
\[ \Psi(p,x,\cdot)= H(p,x,\cdot)\circ H_{\xi(p)}(p,x,\cdot)^{-1} \in \cS\cG \]
is defined and $X$-holomorphic on $P\times X$.  This is clear for $p\in V$, while for $p\in P\setminus V$, we have $\xi(p)=1$ and hence $\Psi(p,x,z)=z$. 

The last term $H_{\xi(p)}(p,x,\cdot)$ on the right-hand side of (\ref{eq:splitting}) is defined and $X$-holo\-morphic on the set $P\times U$, and $H_{\xi(p)}(p,x,z)=z$ for all $(p,x)\in P_0 \times U$, since $\xi(p)=0$ for $p\in P_0$.  By  the  observation made at the beginning of the proof, this term can be approximated by an $X$-holomorphic map $\Theta : P\times X\to \cS\cG$ whose value is the identity map on $\C^n$ for all points $(p,x)\in P_0\times X$. Setting 
\begin{equation}\label{eq:solution}
\wt H(p,x,\cdot) = H(p,x,\cdot)\circ H_{\xi(p)}(p,x,\cdot)^{-1} \circ \Theta(p,x,\cdot)  \in \cS\cG,
\end{equation}
we get an $X$-holomorphic map $\wt H : P\times X\to \cS\cG$ which agrees with $H$ on $P_0\times X$ and approximates $H$ on $P\times K$. Furthermore, there is a homotopy $\Theta_s : (V\times X)\cup (P\times U)\to \cS\cG$, $s\in [0,1]$, of holomorphic maps such that $\Theta_0(p,x,\cdot)=H_{\xi(p)}(p,x,\cdot)$, $\Theta_1=\Theta$, $\Theta_s(p,x)=\Id$ for $(p,x)\in P_0 \times U$, and $\Theta_s$ approximates $\Theta_0$ uniformly on $P\times K$.  Inserting $\Theta_s$ in place of $\Theta$ in (\ref{eq:solution}) gives a homotopy $H_s : (V\times X)\cup (P\times U)\to \cS\cG$, $s\in [0,1]$, from $H_0=H$ to $H_1=\wt H$ with the desired properties. This completes the proof.
\end{proof}

%  INTERPOLATION

\section{Interpolation by entire curves of automorphisms}  
\label{sec:interpolation}

\noindent
By the fact that the values of holomorphic functions on Stein spaces can be arbitrarily prescribed on discrete sets, Theorem \ref{th:interpol} is an immediate corollary of the following lemma on interpolation by entire curves, except for the case of $\Auta\C^2$, which is treated in Section \ref{sec:plane}.

\begin{lemma} \label{lem:entire}
Let $\cG$ be one of the groups  $\Aut\C^n$,  $\Autone\C^n$, or $\Autsp\C^n$.  Given a sequence $(\phi_k)_{k\in \N}$ in $\cG$, there is a holomorphic map $F : \C\to \cG$ satisfying $F(k)=\phi_k$ for every $k\in\N$.  If $\cG$ is one of the groups $\Auta\C^n$, $\Autaone\C^n$, or $\Autasp\C^n$, then the interpolation is possible at finitely many points.
\end{lemma}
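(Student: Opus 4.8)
The plan is to prove both assertions at once, reducing the interpolation to three easier ones by means of the canonical factorisation recalled in Section~\ref{sec:prelim}. Write $\phi_k=a_k+A_kH_k$, where $a_k=\phi_k(0)\in\C^n$, $A_k=D_0\phi_k\in\cG_0$, and $H_k\in\cS\cG$. It suffices to construct holomorphic maps $T\colon\C\to\C^n$, $\Lambda\colon\C\to\cG_0$ and $\mathcal H\colon\C\to\cS\cG$ with $T(k)=a_k$, $\Lambda(k)=A_k$ and $\mathcal H(k)=H_k$: then $F(z)=T(z)+\Lambda(z)\,\mathcal H(z)$ is holomorphic into $\cG$ and $F(k)=a_k+A_kH_k=\phi_k$. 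Here $T$ exists by Weierstrass interpolation applied to each coordinate, and $\Lambda$ exists because $\cG_0$ — one of $GL_n(\C)$, $SL_n(\C)$, $Sp_n(\C)$ — is a complex Lie group and hence satisfies the basic Oka property with interpolation on discrete subsets of $\C$ (Grauert); for the polynomial groups, $\Lambda$ is still a holomorphic, not necessarily polynomial, family of linear automorphisms, which is all that is needed. So the real issue is interpolating the Schwarz parts $H_k$.

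For this I would use the dilation trick. For $H\in\cS\cG$ with $H(w)=w+\sum_{|\alpha|\ge2}c_\alpha w^\alpha$ set
\[
 H_s(w)\;=\;s^{-1}H(sw)\;=\;w+\sum_{|\alpha|\ge2}c_\alpha\,s^{|\alpha|-1}w^\alpha ,\qquad s\in\C .
\]
The series converges locally uniformly on $\C\times\C^n$, so $(s,w)\mapsto H_s(w)$ is holomorphic, with $H_0=\Id$ and $H_1=H$; for $s\ne0$ the map $H_s$ is the conjugate of $H$ by the dilation $w\mapsto sw$, which multiplies the Jacobian, the volume form, resp.\ the symplectic form by a nonzero scalar, hence leaves $\cG$ invariant, and $H_s$ evidently has the form~(\ref{eq:Schwarz}); finally $H_s$ is polynomial of the same $w$-degree as $H$ whenever $H$ is. Thus $s\mapsto H_s$ is an entire arc in $\cS\cG$ joining $\Id$ to $H$. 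Now pick entire ``gate'' functions $g_k\colon\C\to\C$ with $g_k(j)=\delta_{jk}$ for all integers $j$ and with $\sup\{|g_k(z)|:|z|\le k-\tfrac12\}<\eta_k$, the $\eta_k>0$ being prescribable as small as we please — for instance $g_k(z)=(-1)^k\bigl(\sin\pi z\bigr)\bigl(\pi(z-k)\bigr)^{-1}e^{\lambda_k(z-k)}$ with $\lambda_k$ large, using that $\operatorname{Re}(z-k)\le-\tfrac12$ when $|z|\le k-\tfrac12$. Put
\[
 \mathcal H(z)\;=\;(H_1)_{g_1(z)}\circ(H_2)_{g_2(z)}\circ\cdots ,
\]
a finite composition when the interpolation set is finite and a countable one in general. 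Since $g_j(k)=\delta_{jk}$, at $z=k$ every factor but the $k$-th is the identity and the $k$-th equals $H_k$, so $\mathcal H(k)=H_k$.

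The step I expect to be the main obstacle is showing that, when the interpolation set is infinite, this composition converges (uniformly on compacta of $\C\times\C^n$) to a holomorphic map $\C\to\cG$, i.e.\ that $\mathcal H(z)$ is a genuine automorphism for each $z$. Fix $z$; for every $k>|z|+\tfrac12$ one has $|g_k(z)|<\eta_k$, so, having chosen $\eta_k$ so small that both $H_s$ and $H_s^{-1}$ are $2^{-k}$-close to the identity on the ball $\{|w|\le k\}$ whenever $|s|\le\eta_k$, the tail of the composition consists of automorphisms $2^{-k}$-close to the identity on balls of radius $k$, together with analogous control of their inverses. This is exactly the situation handled by the standard convergence argument of Anders\'en--Lempert theory (see \cite[Chapter~4]{FF:book}): the partial compositions and their inverses converge, the limit is an automorphism of $\C^n$, and composing it with the fixed finite ``head'' of the product gives $\mathcal H(z)\in\Aut\C^n$, hence in $\cG$, since $\cG$ is closed in $\Aut\C^n$ and each factor lies in $\cS\cG$; the same estimates, now uniform in $z$ on discs, give holomorphy of $\mathcal H$. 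When the interpolation set is finite this step is vacuous, and because each $(H_k)_{g_k(z)}$ is then a polynomial Schwarz automorphism and the product is finite, $\mathcal H$ and hence $F$ take values in the polynomial group $\cG$ — this is precisely why the construction delivers interpolation at finite sets for $\Auta\C^n$, $\Autaone\C^n$, $\Autasp\C^n$, whereas over an infinite set the degree of $\mathcal H(z)$ would be unbounded, forcing the separate treatment of $\Auta\C^2$ by structure theory and leaving the case $n>2$ out of reach.
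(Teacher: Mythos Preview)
Your argument is correct and rests on the same ingredients as the paper's: the decomposition $\phi_k=a_k+A_k\psi_k$ (the paper writes $\psi_k$ for what you call $H_k$) reducing to $\cS\cG$, Grauert's Oka principle for $\cG_0$, and the dilation isotopy $s\mapsto s^{-1}\psi(s\,\cdot\,)$ joining $\Id$ to $\psi$ inside $\cS\cG$. The organisation, however, differs. The paper constructs the interpolant inductively by \emph{left}-composing corrections: starting from $H_1(x)=\psi_1^x$, it sets $H_{k+1}(x)=\theta_{k+1}^{h_k(x)}\circ H_k(x)$, where $\theta_{k+1}=\psi_{k+1}\circ H_k(k+1)^{-1}\in\cS\cG$ and $h_k$ is entire, vanishes at $0,1,\ldots,k$, equals $1$ at $k+1$, and is small on $\lvert x\rvert\le k$. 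You instead form the direct right-product $(\psi_1)_{g_1}\circ(\psi_2)_{g_2}\circ\cdots$ with Kronecker-delta gate functions. Your scheme has the pleasant feature that the factors are mutually independent (no need to compute $H_k(k+1)$ before defining the next step); the paper's scheme buys a cleaner convergence argument, since $\epsilon_k$ is chosen \emph{after} one sees where $H_k$ sends the relevant compact set, so $\|H_{k+1}-H_k\|$ is controlled directly without having to bound Lipschitz constants of the partial products --- a step your right-product does require, via Cauchy estimates on the uniformly bounded tails. Both routes yield the same conclusion, and the finite-set case for the algebraic groups drops out identically.
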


\begin{proof}
Recall that $\cG_0=\cG\cap GL_n(\C)$ and $\cS\cG$ is the subgroup  consisting of all $F\in \cG$ of the form
\[ F(z) = z + \sum_{\vert \alpha\vert=2}^\infty c_\alpha z^\alpha, \qquad  z\in \C^n. \]
Let $\phi_k(0)=a_k\in\C^n$ and $D_0\phi_k(0)=A_k\in \cG_0$ for $k\in \N$.  Then we have 
\[ \phi_k(z) = a_k + A_k\psi_k(z),\qquad z\in \C^n,\ \psi_k\in \cS\cG,\ k\in \N. \]
By the Oka property with interpolation for maps to complex Lie groups, there are entire maps $a: \C\to \C^n$ and $A: \C\to \cG_0$ such that $a(k)=a_k$ and $A(k)=A_k$ for all $k\in \N$.  Hence it suffices to find an entire map $H: \C\to \cS\cG$ satisfying $H(k)= \psi_k$ for all $k\in \N$; then the map $F:\C\to \cG$, defined by 
\[ F(x)(z) = a(x) + A(x) H(x)(z),\qquad x\in \C,\ z\in \C^n, \]
satifies the conclusion of the lemma.

We shall construct $H$ as a limit of entire maps $H_k : \C\to \cS\cG$ satisfying $H_k(j)=\psi_j$ for $j=1,\ldots,k$, such that the sequence $(H_k)$ converges uniformly on compacta in $\C\times \C^n$.  If $\cG$ is one of the groups $\Auta\C^n$, $\Autaone\C^n$, or $\Autasp\C^n$, then we are unable to pass to the limit within $\cG$, and we only obtain interpolation at finitely many points.

For $F\in \cS\cG$ as above we define 
\[  F^t(z) =  t^{-1} F(tz) = z + \sum_{\vert \alpha\vert=2}^\infty t^{\vert \alpha\vert -1} c_\alpha z^\alpha, \qquad  z\in \C^n,\ t\in \C. \]
The map $t\mapsto F^t \in \cS\cG$ is an entire curve of automorphisms with $F^0=\Id$ and $F^1=F$.

We start by setting $H_1(t)=\psi_1^t$ for $t\in \C$.  Clearly, $H_1(0)=\Id$ and $H_1(1)=\psi_1$.  Fix a number $\epsilon_1>0$ and choose an entire function $h_1\in\cO(\C)$ such that 
\[ h_1(0)=h_1(1)=0, \quad h_1(2)=1,\quad  |h_1(x)|<\epsilon_1 \text{ when}\ |x|\le 1. \]
Set $\theta_{2}= \psi_2\circ H_1(2)^{-1} \in \cS\cG$ and let
\[ H_2(x) =  \theta^{h_1(x)}_{2}  \circ H_1(x) \in \cS\cG,\qquad x\in \C. \]
We have $H_2(0)=H_1(0)=\Id$ and $H_2(1)=H_1(1)=\psi_1$ (since $h_1(0)=h_1(1)=0$), and $H_2(2)=  \theta_{2} \circ H_1(2) = \psi_2$. Furthermore, as $h_1$ is small on the disc where $|x|\le 1$, $H_2$ is close to $H_1$ there.  Next we pick a number $\epsilon_2>0$ and an entire function $h_2\in\cO(\C)$ such that 
\[ h_2(0)=h_2(1)=h_2(2)=0, \quad h_2(3)=1,\quad  |h_2(x)|<\epsilon_2 \text{ when}\ |x|\le 2. \]
Set $\theta_{3}= \psi_3\circ H_2(3)^{-1} \in \cS\cG$ and
\[ H_3(x) =  \theta^{h_2(x)}_{3}  \circ H_2(x)\in \cS\cG,\qquad x\in \C. \]
As before, we verify that the entire map $H_3: \C\to \cS\cG$ assumes the correct values at the points $0,1,2,3$, and is close to $H_2$ on the disc where $|x|\le 2$ since $h_2$ is small there.  It is clear how the construction continues.  If $\epsilon_k\to 0$ sufficiently fast, then $(H_k)$ converges uniformly on compacta in $\C\times \C^n$ to an entire map $H: \C\to \cS\cG$ with $H(k)= \psi_k$ for all $k\in \N$.
\end{proof}

%  BOPAI and POPAI

\section{The Oka property with approximation and interpolation} 
\label{sec:BOPAI}

\noindent
In this section we show that our approximation results, Theorems \ref{th:BOPA} and \ref{th:POPA}, can be combined with interpolation on finite or infinite discrete sets (Theorem \ref{th:interpol}).  This proves Theorem \ref{th:OP}.  We begin with the non-parametric case.

\begin{theorem} \label{th:BOPAI}
Let $\cG$ be one of the groups $\Aut\C^n$,  $\Autone\C^n$, or $\Autsp\C^n$.  Assume that $X$ is  a Stein space, $K$ is a compact $\cO(X)$-convex subset of $X$, $(x_k)_{k\in\N}$ is a discrete sequence without repetition in $X$, $(\phi_k)_{k\in \N}$ is a sequence in $\cG$, and $F: K\to \cG$ is a holomorphic map such that
\begin{itemize}
\item[\rm (i)]  $F(x_k)=\phi_k$ for all $x_k\in K$, and
\item[\rm (ii)]  the map $x\mapsto D_0 F(x)$ from a neighbourhood of $K$ into $\cG_0$ extends to a continuous map $X\to \cG_0$.
\end{itemize} 
Given a compact set $B\subset \C^n$ and number $\epsilon>0$, there is a holomorphic map $\wt F: X\to \cG$, homotopic to $F$ over $K$ through a homotopy that is fixed at the points $x_k\in K$, such that 
\begin{itemize}
\item[\rm (a)]  $\wt F(x_k)=\phi_k$ for all $k\in \N$, and 
\item[\rm (b)]  $\sup\bigl\{\vert \wt F(x,z)-F(x,z) \vert : x\in  K,\ z\in B\bigr\} < \epsilon$.
\end{itemize} 
The result holds for $\cG=\Auta\C^2$ if the sequence $(\deg\phi_k)$ is bounded.  If $\cG$ is one of the groups $\Auta\C^n$, $\Autaone\C^n$, or $\Autasp\C^n$, then the result holds with interpolation at finitely many points of $X$.
\end{theorem}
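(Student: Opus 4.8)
The plan is to combine the approximation results of Section \ref{sec:BOPA} with the entire-curve interpolation of Section \ref{sec:interpolation} by a standard but careful merging of two data sets over overlapping regions. First I would reduce, exactly as in the proofs of Theorems \ref{th:BOPA} and \ref{th:POPA}, to the case where $F$ and all the $\phi_k$ take values in the Schwarz subgroup $\cS\cG$: write $F(x)=a(x)+A(x)H(x)$ with $a(x)=F(x)(0)$, $A(x)=D_0F(x)$, $H(x)\in\cS\cG$, and likewise $\phi_k=a_k+A_k\psi_k$; use the Oka principle with interpolation and approximation for maps into $\C^n$ (Oka--Weil plus prescribing values on a discrete set) and into the complex Lie group $\cG_0$ (Grauert plus interpolation, valid since $D_0F$ already extends continuously by hypothesis (ii)) to produce the linear and translational parts of $\wt F$ globally on $X$ with the right values at the $x_k$, so that it remains to treat a map $H:K\to\cS\cG$ with $H(x_k)=\psi_k$.

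Next I would use Lemma \ref{lem:entire} to obtain a \emph{global} holomorphic map $G:X\to\cS\cG$ with $G(x_k)=\psi_k$ for all $k$: indeed the lemma produces an entire curve $\C\to\cS\cG$ interpolating any prescribed sequence, and composing with a holomorphic map $X\to\C$ that sends $x_k$ to $k$ (such a map exists since $X$ is Stein and $(x_k)$ is discrete without repetition) gives $G$. Now $H$ and $G$ agree at the interpolation points lying in $K$, so the difference $D(x)=G(x)^{-1}\circ H(x)\in\cS\cG$, defined on a neighbourhood of $K$, equals $\Id$ at the points $x_k\in K$. The goal is to approximate $D$ uniformly on $K\times B'$ (for a sufficiently large compact $B'\subset\C^n$ absorbing the distortion coming from composing with $G$ on $B$) by a holomorphic map $\wt D:X\to\cS\cG$ that equals $\Id$ at \emph{all} the $x_k$; then $\wt F$ obtained from $a,A$ and $G\cdot\wt D$ will satisfy both (a) and (b), with the homotopy supplied by the homotopies in the individual steps (the Schwarz group is contractible, Lemma \ref{lem:retract}, which keeps everything fixed at the $x_k$).

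To produce $\wt D$ I would rerun the Andersén--Lempert scheme from the proof of Theorem \ref{th:BOPA} on $D$, but enforcing interpolation at each stage: write $D$ as a composition of time-one flows of shear vector fields $b_j(x)W_j(z)$ with $b_j$ holomorphic near $K$; since $D(x_k)=\Id$ at the $x_k\in K$, one checks (as in the reduction around (\ref{eq:vectorfield})--(\ref{eq:floweq})) that the relevant coefficient data vanish, or can be arranged to vanish, at those points; then approximate each $b_j$ by $\wt b_j\in\cO(X)$ \emph{that additionally vanishes at every $x_k$} — possible because a holomorphic function on the Stein space $X$ can simultaneously be Oka--Weil close on $K$ and take prescribed values on the discrete set $(x_k)$ (apply Oka--Weil to $b_j$ minus an interpolating function supported away from $K$, or use the interpolation-with-approximation form of Cartan's theorems directly). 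The concatenation of the flows of $\wt b_j(x)W_j(z)$ is then the identity at every $x_k$ and close to $D$ on $K\times B'$, as desired; for $\cG=\Autone\C^n$ or $\Autsp\C^n$ one uses the divergence-zero (resp.\ Hamiltonian) shear decompositions exactly as in Theorem \ref{th:BOPA}, and for $\Auta\C^n$, $\Autaone\C^n$, $\Autasp\C^n$ one works with the polynomial shear subgroup throughout, which is why only finitely many interpolation points are allowed there — and for $\Auta\C^2$ with bounded $\deg\phi_k$ the bounded-degree interpolation of Section \ref{sec:plane} replaces Lemma \ref{lem:entire}. The main obstacle is the bookkeeping in this last step: ensuring that \emph{simultaneously} (i) the approximation on $K$ is good enough after composing $G$ with $\wt D$ (forcing the enlargement of $B$ to $B'$), (ii) the vanishing at the $x_k$ is preserved through every flow in the concatenation and not just at the level of the vector fields, and (iii) the accompanying homotopy from $F$ to $\wt F$ is genuinely fixed at all $x_k\in K$ and not merely at $t=0,1$; none of these is deep, but they must be tracked together, and getting the order of the reductions right (Schwarz reduction first, then the $G$-twist, then the interpolating Andersén--Lempert scheme) is what makes the argument go through cleanly.
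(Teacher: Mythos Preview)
Your proposal is correct and follows essentially the same strategy as the paper: reduce to interpolating the identity by dividing out a global interpolating map, then rerun the Anders\'en--Lempert scheme of Theorem~\ref{th:BOPA} while forcing the approximating coefficient functions to vanish on the discrete set $(x_k)$. The only cosmetic difference is the order of the two reductions: the paper first invokes Theorem~\ref{th:interpol} to produce $G:X\to\cG$ with $G(x_k)=\phi_k$, sets $H=F\circ G^{-1}$ (so $H(x_k)=\Id$ on $K$), and only then reduces $H$ to the Schwarz subgroup inside the approximation argument; you reduce to $\cS\cG$ first and then interpolate within $\cS\cG$. Both orderings work and lead to the same core step.
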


\begin{proof}
We treat the case when $\cG$ is $\Aut\C^n$,  $\Autone\C^n$, $\Autsp\C^n$, or $\Auta\C^2$.  By Theorem \ref{th:interpol}, there is a holomorphic map $G: X \to \cG$ satisfying $G(x_k)=\phi_k$ for all $k\in \N$. The holomorphic map $H: K\to \cG$, $x\mapsto F(x)\circ G(x)^{-1}$, then satisfies $H(x_k)=\Id$ for all $k$ such that $x_k\in K$.  By Lemma \ref{lem:two-conditions}, the map $x\mapsto D_0 H(x)$ extends from a neighbourhood of $K$ to a continuous map $X\to \cG_0$ (we choose a neighbourhood $U$ of $K$ such that $\overline U$ is contained in the neighbourhood in (ii) and $\overline U\hookrightarrow X$ is a cofibration).  Since $K$ is compact, the set $\bigcup\limits_{x\in K} G(x)(\bar B)\subset \C^n$ is compact and hence is  contained in some ball  $B'\subset \C^n$. We look for a holomorphic map $\wt H: X\to \cG$ satisfying the following two conditions.
\begin{itemize}
\item[\rm (a')]  $\wt H(x_k)=\Id$ for all $k\in \N$.
\item[\rm (b')]  $\sup\bigl\{\vert \wt H(x,z)-H(x,z) \vert : x\in  K,\ z\in B'\bigr\} < \epsilon$.
\end{itemize} 
These conditions clearly imply that the map $\wt F = \wt H\circ G: X\to \cG$ satisfies properties (a) and (b) in Theorem \ref{th:BOPAI}.

We find a map $\wt H$ with these properties by following the proof of Theorem \ref{th:BOPA}, paying attention to the additional requirement that we interpolate the identity map at the points $x_k$.  We will go through the individual steps to see that this is possible.  The reader should  keep in mind that $H$ now plays the same role as $F$ does in the proof of Theorem \ref{th:BOPA}.

The first step is to adjust the affine part of the map $H$, thereby reducing the problem to the case when $H$ takes values in the Schwarz subgroup $\cS\cG$ (\ref{eq:Schwarz}). This is done as in the proof of Theorem \ref{th:BOPA} using the Oka principle for maps from Stein spaces to complex Lie groups. 

Hence we may assume that $H$ takes values in $\cS\cG$.  We replace the ball $B'$ above by a suitably bigger ball $B''\subset \C^n$ in what follows. Consider the homotopy $H_t(x,z)=t^{-1}H(x,tz)$ for $t\in \C$, with $H_0(x,z)=z$, and define the corresponding infinitesimal generator $V(t,x,z)$ as in (\ref{eq:vectorfield}).  At points $x_k\in K$, we have $H_t(x_k,z)=z$ for all $t\in \C$, and hence $V(t,x_k,z)=0$ for all $t\in\C$ and $z\in \C^n$.  Thus the coefficients $v_{\alpha,i}(t,x)$ of $V$ vanish at the points $x=x_k\in K$. As explained in the proof of Theorem \ref{th:BOPA}, we can preserve this condition at all subsequent steps of the proof, which are as follows.
\begin{itemize}
\item[\rm (a)]  Replace the  infinitesimal generator $V$ by autonomous vector fields on short time intervals in $[0,1]$.
\item[\rm (b)]  Approximate these autonomous vector fields by polynomial vector fields on $\C^n$ depending holomorphically on $x\in X$, see (\ref{eq:PVF}).
\item[\rm (c)]  Decompose each polynomial vector field as a finite sum of polynomial shear vector fields depending holomorphically on $x\in X$,  see (\ref{eq:PVF2}).
\item[\rm (d)]  Concatenate the flows of the polynomial shear vector fields obtained in step (c) to obtain the desired approximation.
\end{itemize}
In step (b), when approximating the coefficients of the polynomial vector fields uniformly on $K$ by holomorphic functions on $X$, we can ensure that these functions vanish on the discrete sequence $(x_k)$ by a standard result of Stein space theory.  In step (c), we can also obtain shear vector fields that depend holomorphically on $x \in X$ and vanish on $(x_k)$ (see the proof of Theorem \ref{th:BOPA}).  The automorphisms of $\C^n$ obtained from their flows then approximate $H$ on $K$ and match the identity on $\C^n$ at each point $x_k$. 
\end{proof}

Our main result, Theorem \ref{th:OP}, is the parametric version of Theorem \ref{th:BOPAI}.  The proof is essentially the same as that of Theorem \ref{th:BOPAI}.  We first use Theorem \ref{th:interpol} to reduce to the case when $\phi_k=\Id$ for all $k$, and then we apply the parametric versions of the  arguments in the proof of Theorem \ref{th:BOPAI}.  We omit the details.

We conclude this section by formulating an extension property that is a necessary and sufficient condition for any of our six groups to satisfy the full parametric Oka property with approximation and interpolation.

\begin{theorem} \label{th:extension}
Let $\cG$ be one of the subgroups of $\Aut\C^n$, $n>1$, in (\ref{eq:groups}).  The following are equivalent.
\begin{enumerate}
\item[(i)]  Let $X_0$ be a closed subvariety of a Stein space $X$.  If a holomorphic map $X_0 \to \cG$ has a continuous extension $X \to \cG$, then it has a holomorphic extension.
\item[(ii)]  Let $X_0$ be a closed subvariety of a Stein space $X$.  Every holomorphic map $X_0\to\cS\cG$ extends to a holomorphic map $X\to\cS\cG$.
\item[(iii)]  $\cG$ satisfies POPAI.
\end{enumerate}
\end{theorem}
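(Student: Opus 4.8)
The plan is to prove the cycle (iii) $\Rightarrow$ (i) $\Rightarrow$ (ii) $\Rightarrow$ (iii), with essentially all of the work in the last implication. For (iii) $\Rightarrow$ (i) one specialises POPAI to the case where $P$ is a point, $P_0=\varnothing$, and $K=\varnothing$; the resulting statement is exactly (i). For (i) $\Rightarrow$ (ii), let $g\colon X_0\to\cS\cG$ be holomorphic. Since $\cS\cG$ is contractible by Lemma~\ref{lem:retract} and $X_0\hookrightarrow X$ is a cofibration, $g$ extends to a continuous map $X\to\cS\cG\subset\cG$, hence by (i) to a holomorphic map $\widetilde g\colon X\to\cG$; then $x\mapsto(D_0\widetilde g(x))^{-1}\bigl(\widetilde g(x)-\widetilde g(x)(0)\bigr)$ is a holomorphic map $X\to\cS\cG$ still restricting to $g$ on $X_0$, since there $\widetilde g=g$ already has the form~(\ref{eq:Schwarz}).

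The substance is (ii) $\Rightarrow$ (iii), and here I would first reduce POPAI for $\cG$ to POPAI for the Schwarz subgroup $\cS\cG$, exactly as in the proofs of Theorems~\ref{th:BOPA} and~\ref{th:POPA}. Given POPAI data $F\colon P\times X\to\cG$, write $F(p,x)=a(p,x)+A(p,x)H(p,x)$ with $a(p,x)=F(p,x)(0)\in\C^n$, $A(p,x)=D_0F(p,x)\in\cG_0$, and $H(p,x)\in\cS\cG$; each of $a$, $A$, $H$ inherits the POPAI hypotheses into $\C^n$, $\cG_0$, $\cS\cG$ respectively. Applying POPAI for $\C^n$-valued maps (Cartan's theorems) and for $\cG_0$-valued maps (Grauert's Oka principle, in its parametric form with interpolation on subvarieties; see~\cite{FF:book}) deforms $a$ and $A$ to $X$-holomorphic maps; all six groups contain the translations, so the intermediate maps $a_s+A_sH$ still take values in $\cG$, and one is left with the task of deforming $H$ within $\cS\cG$. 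Hence it suffices to establish POPAI for $\cS\cG$.

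For POPAI of $\cS\cG$ the idea is to reduce interpolation on $X_0$ to interpolation of the identity, just as Theorem~\ref{th:interpol} was used in the proof of Theorem~\ref{th:BOPAI}. I would first upgrade (ii) to the parametric statement that the continuous family $p\mapsto F(p,\cdot)|_{X_0}$ of holomorphic maps $X_0\to\cS\cG$ extends to a continuous family $p\mapsto G(p,\cdot)$ of holomorphic maps $X\to\cS\cG$; no compatibility over $P_0$ is required. Then $H:=F\circ G^{-1}\colon P\times X\to\cS\cG$ has $X$-holomorphic restriction to $P_0\times X$, is holomorphic on $K\cup X_0$ in each parameter, and equals $\Id$ on $P\times X_0$. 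Re-running the parametric Anders\'en-Lempert argument from the proof of Theorem~\ref{th:POPA} on $H$, its flow generator $V(t,p,x,\cdot)$ vanishes for $x\in X_0$, so by choosing the approximating holomorphic coefficients of the polynomial shear vector fields to vanish on $X_0$ --- legitimate by Cartan's Theorem~B --- the concatenated flows give $X$-holomorphic maps equal to $\Id$ on $X_0$. One thereby obtains an $X$-holomorphic $\widetilde H\colon P\times X\to\cS\cG$ with $\widetilde H=\Id$ on $P\times X_0$, which coincides with $H$ over $P_0$ (since the argument fixes $X$-holomorphic parameter-slices) and approximates $H$ on $P\times K\times B'$, together with a homotopy $H_s$ sharing these properties. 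Then $F_s:=H_s\circ G$ is the required POPAI deformation of $F$, provided $B'$ contains $\bigcup\{G(p,x)(B):p\in P,\ x\in K\}$; in particular it is fixed over $P_0$, because there $H_s=H$ and so $F_s=H\circ G=F$.

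The main obstacle is the parametric upgrade of (ii) invoked above: extending a continuous family of holomorphic maps $X_0\to\cS\cG$ to a continuous family of holomorphic maps $X\to\cS\cG$. This is the standard passage from a basic to a parametric Oka-type property, and I would carry it out concretely, in the spirit of the passage from Theorem~\ref{th:BOPA} to Theorem~\ref{th:POPA} --- by induction over the cells of $P$, extending over each new cell by means of (ii) together with the contractibility of $\cS\cG$ and cofibration arguments of the type used in Lemma~\ref{lem:two-conditions}. Once this is in hand, threading the vanishing condition on $X_0$ through the (already delicate) parametric construction of Theorem~\ref{th:POPA} is merely bookkeeping: every step there --- replacing the generator by autonomous fields on short time intervals, truncating to a polynomial field, decomposing it into shear vector fields, and approximating the coefficients --- can be arranged to respect the ideal sheaf of $X_0$, while the parameter-handling devices $\tau_s$, $\rho$, $\xi$ of that proof preserve the condition ``equal to $\Id$ on $X_0$'' automatically.
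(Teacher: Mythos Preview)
The paper states Theorem~\ref{th:extension} without proof (it is introduced as ``formulating an extension property'' and the section ends immediately after the statement), so there is no argument to compare against. Your cycle (iii)$\Rightarrow$(i)$\Rightarrow$(ii)$\Rightarrow$(iii) is the natural one, and your reductions---splitting off the affine part via Grauert's parametric Oka principle to pass to $\cS\cG$, then composing with an auxiliary extension $G$ to reduce to interpolating the identity along $X_0$, then threading the ideal of $X_0$ through the Anders\'en--Lempert machinery exactly as in Theorems~\ref{th:POPA} and~\ref{th:BOPAI}---are precisely what the methods of the paper suggest.

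Two remarks. First, your phrase ``the argument fixes $X$-holomorphic parameter-slices'' overstates what the proof of Theorem~\ref{th:POPA} actually gives: the $\tau_s,\rho,\xi$ construction fixes the homotopy over $P_0$, not over every slice that happens to be $X$-holomorphic. That is, however, all you need, and your subsequent verification that $F_s=H_s\circ G$ is fixed on $(P_0\times X)\cup(P\times X_0)$ goes through. Second, the step you yourself flag as ``the main obstacle''---upgrading (ii) to a continuous family of holomorphic extensions $G(p,\cdot)$---is genuinely the crux, and your cell-by-cell sketch presupposes that $P$ is (or can be approximated by) a CW complex; this matches the paper's standing convention that $P_0\subset P$ are compact in some $\R^m$, but it is worth saying explicitly. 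An alternative that sidesteps this is to arrange $G=F$ on $P_0\times X$ from the outset (which your cell induction can accommodate, since $F$ is already $X$-holomorphic there), so that $H=\Id$ on $P_0\times X$ and the simpler variant (2$'$) of the POPA argument applies directly.
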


%  THE PLANE

\section{Algebraic automorphisms of the affine plane} 
\label{sec:plane}

\noindent
In this section we complete the proof of Theorem \ref{th:interpol} by explaining how to find holomorphic maps from a Stein space $X$ into the group $\cG=\Auta\C^2$ with prescribed values of bounded degree on an infinite discrete subset.  Note that if $X$ has finitely many irreducible components, then bounded degree is a necessary condition for extendibility.  Namely, if $f:X\to\cG$ is holomorphic, then $X$ is the increasing union of the closed subvarieties $Y_n$, $n\in\N$, where $\deg f\leq n$, so $Y_n=X$ for $n$ sufficiently large.

We recall the basic facts of the structure theory of $\cG$ (see \cite[Section 2]{FM1989}).  By the Jung-van der Kulk theorem, $\cG$ is the amalgamated free product of the subgroups $A$ of affine automorphisms and $E$ of elementary automorphisms, that is, automorphisms of the form $(x,y)\mapsto (ax+p(y),by+c)$, where $a,b,c\in\C$, $ab\neq 0$, and $p$ is a polynomial.  Every $g\in\cG$ that does not belong to the intersection $S=A\cap E$ may be expressed as a composition $g_n\circ\cdots\circ g_1$, where each $g_i$ belongs to either $A$ or $E$ but not to $S$, and no two consecutive factors belong to the same subgroup $A$ or $E$.  This expression for $g$ is unique, except that for every $s\in S$ and every $i>1$, we may replace $g_i$ by $g_i\circ s$ and $g_{i-1}$ by $s^{-1}\circ g_{i-1}$.  The degree of $g$ is the product of the degrees of $g_1,\ldots,g_n$.

The polydegree of $g\notin A$ is the sequence $(d_1,\ldots,d_m)$ of integers $d_j\geq 2$ obtained from the sequence $(\deg g_1,\ldots,\deg g_n)$ by crossing out all the 1's.  Then $\deg g=d_1\cdots d_m$.  The set $G[d_1,\ldots,d_m]$ of all elements of $\cG$ of polydegree $(d_1,\ldots,d_m)$ naturally carries the structure of a complex manifold of dimension $d_1+\cdots+d_m+6$ \cite[Lemma 2.4]{FM1989}, and $\cG$ is the disjoint union
\[ \cG = A \sqcup G[2] \sqcup G[3] \sqcup G[2, 2] \sqcup G[4] \sqcup \cdots. \]
We observe that $\cG$ is thus endowed with what we might call a stratified Oka structure.  (Stratified Oka manifolds were introduced and studied in \cite{FL2014}.)

\begin{proposition}\label{pr:stratified}
Each manifold $G[d_1,\ldots,d_m]$ is Oka.
\end{proposition}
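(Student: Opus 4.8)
The plan is to exhibit each stratum $G[d_1,\ldots,d_m]$ as the total space of a locally trivial holomorphic fibre bundle whose base and fibres are manifestly Oka, and then invoke the standard fact that a holomorphic fibre bundle with Oka fibre over an Oka base has Oka total space (see \cite[\S 5.6]{FF:book}). First I would recall the explicit description of $G[d_1,\ldots,d_m]$ from \cite[Lemma 2.4]{FM1989}: an element of this stratum is a composition $a_0\circ e_1\circ a_1\circ e_2\circ\cdots$ in which the elementary factors have prescribed degrees $d_1,\ldots,d_m$ and the affine factors are free apart from the normalisation removing the redundancy coming from $S=A\cap E$. Concretely this presents $G[d_1,\ldots,d_m]$ as a product of copies of $A$ (or of the homogeneous space $A/S$, or of $S$) with copies of the space of elementary automorphisms of a fixed degree $d_j$, the latter being parametrised by the coefficients $(a,b,c)\in\C^*\times\C^*\times\C$ together with the coefficient vector of the degree-$d_j$ polynomial $p$ with nonzero leading term, i.e.\ $\C^{d_j-1}\times\C^*$.

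Next I would check Okaness of the building blocks. The affine group $A=\C^2\rtimes GL_2(\C)$ is a complex Lie group, hence Oka; the subgroup $S$ and the homogeneous space $A/S$ are likewise Oka (a complex Lie group and a homogeneous space of a complex Lie group are Oka). The space of elementary automorphisms of degree $d$ is, up to the affine reparametrisations already accounted for, a product of $\C^*$'s and affine spaces with a few coordinate hyperplanes or a leading coefficient removed; each such factor is Oka — $\C$ and $\C^*$ are Oka, products of Oka manifolds are Oka, and $\C^d\setminus\{\text{leading coeff}=0\}\cong\C^{d-1}\times\C^*$ is Oka. Assembling these, $G[d_1,\ldots,d_m]$ is fibred (by successively forgetting the factors from the left) over such products with fibres again of the same type, so repeated application of the bundle principle — or simply recognising the whole stratum as a finite product of Oka manifolds, which is the cleanest route if the normalisation can be arranged globally — yields that $G[d_1,\ldots,d_m]$ is Oka.

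I expect the main obstacle to be the bookkeeping in the normalisation: the decomposition $g_n\circ\cdots\circ g_1$ is unique only modulo the $S$-action $g_i\mapsto g_i\circ s$, $g_{i-1}\mapsto s^{-1}\circ g_{i-1}$, so one must either pass to the quotient by this free $S^{m}$-action (and argue the quotient map is a holomorphic fibre bundle, so that Okaness descends/ascends appropriately) or fix a global holomorphic section of the normalisation, i.e.\ a consistent choice putting, say, each intermediate affine factor in a fixed complement of $S$. The second approach is what \cite[Lemma 2.4]{FM1989} effectively does in endowing $G[d_1,\ldots,d_m]$ with its manifold structure of dimension $d_1+\cdots+d_m+6$, and once that chart is in hand the stratum is visibly biholomorphic to a product of copies of $\C$, $\C^*$, and $GL_2(\C)$ (or of $A/S$), every factor of which is Oka; since a finite product of Oka manifolds is Oka, the conclusion follows. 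I would therefore structure the write-up as: (1) quote the product/fibre-bundle description from \cite{FM1989}; (2) note Okaness of $GL_2(\C)$, $A$, $S$, $A/S$, $\C$, $\C^*$; (3) conclude by the product (or fibre bundle) stability of the Oka property.
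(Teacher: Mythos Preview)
Your overall strategy—exhibit the stratum as a holomorphic fibre bundle with Oka base and Oka fibre, then invoke stability of the Oka property under fibre bundles—is exactly the paper's. The paper's proof is a two-line citation: by \cite[proof of Lemma~2.4, Lemma~2.10]{FM1989}, $G[d_1,\ldots,d_m]$ is a holomorphic fibre bundle over $\P_1\times\P_1$ whose fibre is biholomorphic to a product of copies of $\C$ and $\C^*$; since base and fibre are Oka, so is the total space.

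Where your write-up would need tightening is the ``cleanest route'': the stratum is \emph{not} globally a product of copies of $\C$, $\C^*$, and $GL_2(\C)$. The homogeneous space $A/S$ that you correctly single out is $GL_2(\C)/B\cong\P_1$ (here $B$ is the Borel of upper-triangular matrices), and the two copies of $\P_1$ coming from the left and right affine ends of the reduced word give the base of a genuinely nontrivial bundle. There is no global holomorphic section of the normalisation in the sense you hope for: any holomorphic map from $\P_1$ into $G[d_1,\ldots,d_m]$ regarded inside the affine space of coefficients would be constant, so the bundle cannot be trivial as a product of affine factors. Your alternative plan—pass to the quotient by the free $S$-actions and argue via fibre bundles—is the one that works, and it lands you precisely on the $\P_1\times\P_1$ description. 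So rather than trying to manufacture a product chart, cite \cite[Lemma~2.10]{FM1989} (in addition to Lemma~2.4) for the bundle over $\P_1\times\P_1$, note that $\P_1$ and products of $\C$ and $\C^*$ are Oka, and conclude.
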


\begin{proof}
By \cite[proof of Lemma 2.4, Lemma 2.10]{FM1989}, $G[d_1,\ldots,d_m]$ is a holomorphic fibre bundle over $\P_1\times \P_1$, whose fibre is biholomorphic to a product of copies of $\C$ and $\C^*$.  Since the base and the fibre are Oka, so is $G[d_1,\ldots,d_m]$.
\end{proof}

To complete the proof of Theorem \ref{th:interpol}, let $D$ be a discrete subset of a Stein space $X$, and $f:D\to\cG$ be a map such that $\deg f:D\to\N$ is bounded.  Then there is a finite partition $D=D_1\sqcup\cdots\sqcup D_n$ such that $f$ maps each $D_j$ into one of the sets $G[d_1,\ldots,d_m]$ or $A$.  Since these sets are connected and Oka, for each $j=1,\ldots,n$, there is a holomorphic map $F_j$ from $X$ into the corresponding set such that $F_j=f$ on $D_j$.  Let $h_j$ be a holomorphic function on $X$ such that $h_j=1$ on $D_j$ and $h_j=0$ on $D\setminus D_j$.

For $\phi\in\cG$, we define an entire curve $\C\to\cG$, $t\mapsto\phi_t$, by the formula
\[ \phi_t(z)=t\phi(0) + t^{-1}(\phi(tz)-\phi(0)), \]
so $\phi_1=\phi$ and $\phi_0=D_0\phi$.  Note that the map $X\to\cG$, $x\mapsto F_j(x)_{h_j(x)}$, is holomorphic.  For $j=1,\ldots,n$, let $\beta_j:D\to GL_2(\C)$ have $\beta_j(x)=\Id$ if $x\in D_j$, and $\beta_j(x)=(D_0 F_i(x))^{-1}$ if $x\in D_i$, $i\neq j$.  Since $GL_2(\C)$ is connected and Oka, $\beta_j$ extends to a holomorphic map $X\to GL_2(\C)$.  Now a holomorphic map $X\to\cG$ extending $f$ is given by the formula
\[ x \mapsto \beta_1(x)\circ F_1(x)_{h_1(x)}\circ\cdots\circ \beta_n(x)\circ F_n(x)_{h_n(x)}.\]

%  BIBLIOGRAPHY

\end{document}